\documentclass[12pt]{article}

\usepackage{amsmath, amssymb, amsthm, graphicx, fancyhdr, textcomp, enumerate, diagbox, tcolorbox, esvect, tikz, adjustbox}
\usetikzlibrary{cd}
\usepackage[bottom=0.5in]{geometry}
\graphicspath{ {./images/} }

\usepackage[
color=orange!80, 
bordercolor=black,
textwidth=3cm,
textsize=small,
colorinlistoftodos]
{todonotes}

\newcommand{\Z}{\mathbb{Z}} 

\newcommand{\Q}{\mathbb{Q}}
\newcommand{\N}{\mathbb{N}}

\newcommand{\cI}{\mathcal{I}}

\newcommand{\cL}{\mathcal{L}}

\newcommand{\gen}[1]{\langle#1\rangle} 
\newcommand{\qgen}[1]{\langle\langle#1\rangle\rangle} 
\newcommand{\Aut}{\mathrm{Aut}} 

\newcommand{\normal}{\trianglelefteq} 

\newcommand{\Conj}{\mathrm{Conj}} 

\newcommand{\sq}{\preccurlyeq} 
\newcommand{\psq}{\prec} 
\newcommand{\thru}{\rhd} 
\newcommand{\dthru}{\rhd_{\mathrm{dih}}} 
\newcommand{\bthru}{\inv{\thru}} 
\newcommand{\Inn}{\mathrm{Inn}} 
\newcommand{\Adconj}{\mathrm{Adconj}} 
\newcommand{\Tak}{\mathrm{Tak}}
\newcommand{\AdTak}{\mathrm{AdTak}}

\newcommand{\inv}[1]{#1^{-1}}
\renewcommand{\phi}{\varphi}

\theoremstyle{definition}
\newtheorem{theorem}{Theorem}[section]
\newtheorem{definition}[theorem]{Definition}
\newtheorem{proposition}[theorem]{Proposition}
\newtheorem{claim}[theorem]{Claim}
\newtheorem{lemma}[theorem]{Lemma}
\newtheorem{corollary}[theorem]{Corollary}
\newtheorem{example}[theorem]{Example}

\pdfpagewidth 8.5in
\pdfpageheight 11in
\topmargin -1.5in
\headheight 0in
\textheight23.5cm
\headheight 77pt
\setlength{\parindent}{0pt}
\setlength{\parskip}{6pt}

\begin{document}

\medskip                        

\title{Complementation of Subquandles}
\author{K.J. Amsberry \ J.A. Bergquist \ T.A. Horstkamp \ M.H. Lee \ D.N. Yetter}
\date{August 2022}

\thispagestyle{plain}
\maketitle
\begin{abstract}
Saki and Kiani proved that the subrack lattice of a rack $R$ is necessarily complemented if $R$ is finite but not necessarily complemented if $R$ is infinite. In this paper, we investigate further avenues related to the complementation of subquandles.  Saki and Kiani's example of an infinite rack without complements is a quandle, which is neither ind-finite nor profinite. We provide an example of an ind-finite quandle whose subobject lattice is not complemented, and conjecture that profinite quandles have complemented subobject lattices. Additionally, we provide a complete classification of subquandles whose set-theoretic complement is also a subquandle, which we call \textit{strongly complemented}, and provide a partial transitivity criterion for the complementation in chains of strongly complemented subquandles. One technical lemma used in establishing this is of independent interest:  the inner automorphism group of a subquandle is always a subquotient of the inner automorphism group of the ambient quandle.


\end{abstract}

\section{Introduction}
Quandles were first introduced as a complete classical knot invariant up to orientation reversal by Joyce \cite{Joyce1982} in 1982. In recent years, quandles and racks have found numerous applications in knot theory and algebraic topology. Aside from these applications, quandles are also interesting as an algebraic structure. Indeed, quandles are an abstraction of groups, as any group is a quandle under the operation of right conjugation. Quandles can also be constructed from abelian groups using the Takasaki operation \cite{Takasaki1943}.

Like other algebraic structures, quandles have subobjects. These subobjects, or subquandles, are difficult to study because quandles lack constants such as an identity element, so that there is no notion of a kernel for a quandle homomorphism, and thus no analogue of normal subgroups.  Nonetheless, some progress has been made towards understanding interesting properties of subquandles. Recently Saki and Kiani \cite{SakiKiani2021} showed that the lattice of subquandles of a finite quandle is necessarily complemented, but that infinite quandles may have non-complemented subquandles. The present work investigates several problems related to subquandle lattices, subquandle inner automorphism groups, and subquandles whose set-theoretic complements are also subquandles. 


\section{Background and Definitions}

\begin{definition}\cite{Joyce1982}
    A \textit{quandle} is a set $Q$ equipped with binary operations $\thru$ and $\bthru$ satisfying the following axioms for all $x,y,z\in Q$:
    \begin{enumerate}
    \item[Q1.] $x\thru x = x$.
    \item[Q2.] $(x\thru y) \bthru y = x = (x\bthru y) \thru y $.
    \item[Q3.] $(x\thru y)\thru z = (x\thru z)\thru (y\thru z)$.
    \end{enumerate}
\end{definition}
In other words, $\thru$ is idempotent, right-invertible, and right-distributive. Note that the mention of $\bthru$ is not necessary if one is content to describe quandles as models of a first order theory, rather than an equational theory, as Q2 is equivalent to the bijectivity of $(-) \thru y$ for all $y \in Q$. Models of the algebraic structure $(Q,\thru)$ satisfying only Q2 and Q3 are called \textit{racks}. 
\begin{example}
The Tait quandle $(\mathbf{T}_3,\thru)$ is a quandle with underlying set $\{1,2,3\}$ and operation given by:
\begin{figure}[h]
    \centering
    \begin{tabular}{c|c c c} 
        $\thru$ &$1$ &$2$ &$3$ \\
        \hline
         $1$ & $1$ & $3$ & $2$\\
         $2$ & $3$ & $2$ & $1$ \\
         $3$ & $2$ & $1$ & $3$
    \end{tabular}
    \caption{The multiplication table for $\mathbf{T}_3$}
    \label{fig:my_label}
\end{figure}
\end{example}
For more about multiplication tables of quandles, see \cite{HoNelson2005}.
\begin{example}
The transpositions in the symmetric group $S_n$ form a quandle under the operation of conjugation. In fact, the Tait quandle can be realized as the transpositions in $S_3$.
\end{example}

\begin{example}
For any group $G$, defining $\thru$ by $x\thru y = x^{-1}yx$ gives a quandle structure on the same underlying set.  This quandle is denoted $\Conj(G)$.
\end{example}

$\Conj$ is, in fact, a functor from the category of groups to the category of quandles, and admits a left adjoint, denoted $\Adconj$, given by quotienting the free group on the underlying set of the quandle by relations which equate the generator $q\thru r$ with the right conjugate of the generator $q$ by the generator $r$.

The following quandle is of particular relevance to us, as it will appear in several later examples.
\begin{example} 
\label{dihedralKei}
\cite{Takasaki1943}
Suppose $A$ is an abelian group. Then, if we define $x\dthru y = 2y-x$ for any $x,y \in A$, $(A,\dthru)$ forms a quandle. We call this quandle a \textit{Takasaki quandle}, denoted $T(A)$. The operation $\dthru$ of $T(A)$ is called the \textit{dihedral action}.
\end{example}

These last are examples of a special class of quandles studied by Takasaki \cite{Takasaki1943} long before Joyce's work \cite{Joyce1982}:

\begin{definition}
\label{KeiDefinition}
    A {\em kei} or {\em involutory quandle} is a quandle $(K, \thru)$ which satisfies 
    $(x\thru y)\thru y = x$ for all $x,y\in K$.
\end{definition}

\begin{definition}
Given quandles $(Q_1,\thru_1),(Q_2,\thru_2)$, a \textit{quandle homomorphism} is a map $h:Q_1\to Q_2$ satisfying $h(x\thru_1y) = h(x)\thru_2h(y)$ for all $x,y\in Q_1$. 
\end{definition}

A \textit{quandle isomorphism} $\rho$ is a bijective quandle homomorphism, and $\rho$ is a \textit{quandle automorphism} if it maps $Q \to Q$. The set of automorphisms of $Q$ form a group $\Aut(Q)$ under composition. 
\begin{definition}
For any quandle $Q$ and fixed $y\in Q$, the map $S_y : x \to x\thru y$ is called the \textit{symmetry at} $y$. By the second and third axioms, symmetries are quandle automorphisms.  The subgroup of $\Aut(Q)$ generated by the symmetries is denoted $\Inn(Q)$ and is called {\em the inner automorphism group of $Q$}. Its elements --- composites of symmetries and their inverses --- are called {\em inner automorphisms}. 
\end{definition}

By a result of Joyce \cite{Joyce1982}, $\Inn(Q)$ is always a normal subgroup of $\Aut(Q)$ .

\begin{definition}
The \textit{orbit} of $s \in Q$ is $s\cdot \Inn(Q):= \{\rho(s):\rho \in \Inn(Q)\}$. Note $\Inn(Q)$ acts on $Q$ on the right via functional application, i.e., $x\cdot \rho = \rho(x)$. 
\end{definition}

\begin{definition}
A quandle $Q$ is \textit{connected} if the action of $\Inn(Q)$ on $Q$ has only one orbit, namely $Q$ itself. That is, for all $x \in Q$, $x \cdot \Inn(Q) = Q.$ 
\end{definition}

Both $\Aut(Q)$ and $\Inn(Q)$ have an additional relationship with the quandle $Q$ as augmentation groups (as defined by Joyce \cite{Joyce1982}).

\begin{definition}
An {\em augmentation of a quandle} $Q$ is a map $|\;|:Q\rightarrow G$ from $Q$ to a group $G$, together with a right action of $G$ on $Q$ by quandle homomorphisms, satisfying 

\begin{itemize} 
\item $q\thru r = q|r|$, and 
\item $|\;|$ is $G$-equivariant with respect to this action and the action of $G$ on itself by right conjugation.
\end{itemize}
\end{definition}

Joyce \cite{Joyce1982} actually defines an ``augmented quandle'' as a $G$-set with the equivariance condition and the condition that $p|p| = p$, then induces a quandle structure on the set.  For our purposes, where the quandle is already given, the definition above is equivalent to the instance of Joyce's notion which induce the given quandle.

The map $q\rightarrow S_q$ gives an augmentation of any quandle $Q$ in $\Inn(Q)$ and in $\Aut(Q)$. Also, any quandle $Q$ admits an augmentation in $\Adconj(Q)$, with the map $|\;|:Q\rightarrow \Adconj(Q)$ given by inclusion of generators.  As shown in \cite{Joyce1982}, this is in fact the initial augmentation of $Q$, so that the fifth author has always favored calling $\Adconj(Q)$, ``the universal augmentation group of $Q$.''

\begin{definition}
A subset $Q'$ of a quandle $(Q,\thru)$ is a \textit{subquandle} of $Q$ if it is also a quandle under the restriction of the same operation. In particular, to show $Q'$ is a subquandle of $Q$, it suffices to show $Q'$ is closed under $\thru|_{Q'}.$ We write $Q'\sq Q$ and $Q' \psq Q$ to denote that $Q'$ is a subquandle and a proper subquandle of $Q$, respectively.
\end{definition}

It is clear that orbits under the action of the inner automorphism group are subquandles. 
In fact, such orbits are of prime importance to quandle theory, as Ehrman et. al. \cite{EGTY2008} proved that they encode the structure of the ambient quandle. We shall briefly reiterate their definitions for a more explicit description of this result.
\\\\
Let $(Q,\thru)$ be any quandle, and let $\{Q_1,\dots,Q_n\}$ be the set of orbits of the action of $\Inn(Q)$ on $Q$. Suppose $Q$ is not connected, as otherwise the following idea is not interesting. Then, Theorem 3.1 of Ehrman, et al. \cite{EGTY2008} states that $Q$ may be expressed \textit{uniquely} as $\#(Q_1, \dots, Q_n, M)$ for some $n\times n$ matrix of quandle homomorphisms $M$, where $\thru_i=\thru|_{Q_i\times Q_i}$, and the ``semi-disjoint union" operation $\#$ is defined in Definition \ref{semi-disjoint} below.

 For each quandle $(Q_i,\thru_i)$, let the universal augmentation map $|\;|_{Q_i}:Q_i \to \Adconj(Q_i)$ be such that $x|y|_{Q_i}:=x\thru_i y$ for any $x,y\in Q_i.$ Then, for each $1\leq i \leq n$, by the universal property of $\Adconj(Q_i)$, there is a unique group homomorphism $\phi_i:\Adconj(Q_i)\to \Aut(Q_i)$ such that the following diagram commutes (note $S:Q_i\to\Inn(Q_i)$ via $S(q_i)=S_{q_i}$ is the symmetry mapping):

\begin{figure}[h]
    \centering
    \adjustbox{center}{
    \begin{tikzcd}[row sep=large,column sep=huge]
    Q_i \arrow[rd,"S"] \arrow[r, "|\;|_{Q_i}"] & \Adconj(Q_i) \arrow[d,dotted,"\phi_i"] \\
    & \Aut(Q_i)
    \end{tikzcd}}
    \caption{Canonical group homomorphism $\phi_i$}
    \label{fig:my_label}
\end{figure}
Call $\phi_i$ the \textit{canonical group homomorphism} from $\Adconj(Q_i)$ to $\Aut(Q_i).$

\begin{definition} \label{semi-disjoint} \cite{EGTY2008} For each $1\leq i,j\leq n$, let $g_{i,j}:\Adconj(Q_i)\to\Aut(Q_j)$ be a group homomorphism such that $g_{i,i}$ is the canonical group homomorphism $\phi_i$. Also let $M=(g_{i,j})$ be the $n\times n$ matrix of group homomorphisms with entries $g_{i,j}$. Then, 
$$\#(Q_1, \dots, Q_n, M):= \left( \coprod_{i=1}^n Q_i, \thru \right),$$
where $x\thru y:= xg_{i,j}(|y|)$ for $x\in Q_j$ and $y \in Q_i.$
\end{definition}
For arbitrary quandles $Q_1,\dots, Q_n$, if $M$ satisfies compatibility conditions given in \cite{EGTY2008}, then $M$ is called a {\em mesh}, and $\#(Q_1, \dots, Q_n, M)$ with $\thru$ as just defined is a quandle, called the {\em semi-disjoint union of $Q_1,\ldots Q_n$ with respect to $M$}.

\begin{theorem}\cite{EGTY2008}\label{meshcriterion}
    A matrix of group homomorphisms $M = (g)_{ij}:\Adconj(Q_i)\to \Aut(Q_j)$ is a \textit{mesh} if and only if the following conditions are satisfied are met for all $i,j,k$ distinct with $ 1\le i,j,k\le n$,
    \begin{enumerate}
        \item $(xg_{ji})(|y|_{Q_j})\thru_{i}z = (x\thru_{i}z)g_{ji}(|yg_{ij}(|z|_{Q_i})|)_{Q_j}$,
        \item $ 
        (xg_{ji}(|y|_{Q_j}))g_{ki}(|z|_{Q_k}) 
        = 
        (xg_{k,i}(|z|_{Q_k}))(g_{ji}(|yg_{k,j}(|z|_{Q_k})|_{Q_j})).
        $
    \end{enumerate}
\end{theorem}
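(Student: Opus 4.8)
\textit{Proof proposal.}
The plan is to verify the three quandle axioms directly for $\#(Q_1,\dots,Q_n,M) = \left(\coprod_{i=1}^n Q_i,\thru\right)$ and to read off from the computation exactly which axioms constrain $M$; ``$M$ is a mesh'' will then mean precisely that this structure is a quandle. Throughout I would use: each $g_{ij}(|y|_{Q_i})$ lies in $\Aut(Q_j)$ and is therefore a quandle homomorphism $Q_j\to Q_j$; the diagonal entry $g_{ii}=\phi_i$ is the canonical homomorphism, so $x\,g_{ii}(|y|_{Q_i}) = x\thru_i y$ for $x,y\in Q_i$; each $g_{ij}$ is a group homomorphism; and the defining relations of $\Adconj(Q_i)$, which give $|y|_{Q_i}\,|z|_{Q_i} = |z|_{Q_i}\,|y\thru_i z|_{Q_i}$ for all $y,z\in Q_i$.

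First I would dispose of Q1 and Q2, which hold for \emph{every} such $M$ and impose no conditions. For Q1, if $x\in Q_i$ then $x\thru x = x\,g_{ii}(|x|_{Q_i}) = x\thru_i x = x$ by idempotence in $Q_i$. For Q2, fix $y\in Q_i$; on each orbit $Q_j$ the map $x\mapsto x\thru y$ is the action of $g_{ij}(|y|_{Q_i})\in\Aut(Q_j)$, a bijection of $Q_j$, and since the $Q_j$ partition $\coprod_k Q_k$ the map $(-)\thru y$ is a bijection of the whole set, with orbitwise inverse $g_{ij}(|y|_{Q_i})^{-1}$ (so one may take $x\bthru y := x\,g_{ij}(|y|_{Q_i})^{-1}$ for $x\in Q_j$).

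All the content lies in Q3, $(x\thru y)\thru z = (x\thru z)\thru(y\thru z)$, which I would split by the orbits $Q_a\ni x$, $Q_b\ni y$, $Q_c\ni z$ into four exhaustive cases. If $a=b$ (with $c$ arbitrary), writing $\alpha := g_{ca}(|z|_{Q_c})\in\Aut(Q_a)$ (which for $c=a$ is the symmetry $S_z$ in $Q_a$), both sides collapse to $(x\thru_a y)\alpha = (x\alpha)\thru_a(y\alpha)$, which holds because $\alpha$ is a quandle automorphism of $Q_a$. If $b=c\ne a$, expanding both sides and using that $g_{ba}$ is a group homomorphism reduces Q3 to the relation $|y|_{Q_b}\,|z|_{Q_b} = |z|_{Q_b}\,|y\thru_b z|_{Q_b}$ in $\Adconj(Q_b)$, which holds automatically. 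If $a=c\ne b$, the same bookkeeping yields exactly condition (1) of the statement (with $i=a$, $j=b$), and if $a,b,c$ are pairwise distinct it yields exactly condition (2) (with $i=a$, $j=b$, $k=c$). Hence $\#(Q_1,\dots,Q_n,M)$ is a quandle --- i.e.\ $M$ is a mesh --- precisely when condition (1) holds for all distinct $i,j$ and condition (2) holds for all pairwise distinct $i,j,k$.

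The argument is a long but mechanical computation rather than a conceptual one. The one place requiring care is the index bookkeeping in Definition \ref{semi-disjoint}: one must consistently track which factor $Q_j$ an intermediate expression $x\thru y$ lands in before applying the next $\thru$, and in the $a=b$ and $b=c$ cases one must recognize that the residual identity is a consequence of an already-available structural fact ($\alpha\in\Aut(Q_a)$, or a relation in $\Adconj(Q_b)$) rather than a fresh hypothesis on $M$. I therefore expect the main obstacle to be purely organizational --- laying out the four cases so that the reductions to (1) and (2) are transparent and it is manifest that no further conditions have been missed.
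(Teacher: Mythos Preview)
The paper does not actually prove this theorem: it is stated with a citation to \cite{EGTY2008} and no proof is given in the present paper, so there is nothing to compare your argument against here. That said, your outline is correct and is exactly the natural way to establish the result: Q1 and Q2 impose nothing, and the case split of Q3 according to which of the orbits containing $x,y,z$ coincide is exhaustive, with the $a=b$ case discharged by $g_{ca}(|z|)\in\Aut(Q_a)$, the $b=c\neq a$ case by the defining relation $|y||z|=|z||y\thru_b z|$ in $\Adconj(Q_b)$ together with $g_{ba}$ being a group homomorphism, and the remaining two cases yielding precisely conditions (1) and (2). This is essentially how the result is proved in the original reference, so your proposal is on solid ground; the only caveat is that the index conventions in Definition~\ref{semi-disjoint} (first index on $g$ is the source orbit of the acting element, second is the target) must be tracked carefully to match the displayed conditions, as the paper's notation is slightly inconsistent.
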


More importantly, the same authors proved that there is always a suitable mesh to decompose a quandle into the semisdisjoint union of its orbits under the inner automorphism group,

\begin{theorem}
    \cite{EGTY2008}
    \label{orbitDecomp}
    For a quandle $Q$ with $n$ orbit decompositions under $\Inn(Q)$, labeled $Q_1,\dots, Q_n$, there exists a mesh $M$ such that 
    \[
    Q \cong \#(Q_1,\dots, Q_n, M).
    \]
\end{theorem}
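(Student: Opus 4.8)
The plan is to build the mesh $M=(g_{i,j})$ directly out of the symmetries of $Q$ and then recognize the resulting semi-disjoint union as $Q$ itself. Since the orbits $Q_1,\dots,Q_n$ are $\Inn(Q)$-invariant, every symmetry of $Q$ restricts to an automorphism of each $Q_j$, so $\rho\mapsto\rho|_{Q_j}$ is a group homomorphism $r_j\colon\Inn(Q)\to\Aut(Q_j)$. Now fix an ordered pair $(i,j)$. Because $\Adconj$ is a functor (being a left adjoint), the subquandle inclusion $\iota_i\colon Q_i\sq Q$ induces a group homomorphism $\Adconj(\iota_i)\colon\Adconj(Q_i)\to\Adconj(Q)$ sending $|q|_{Q_i}\mapsto|q|_Q$. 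Composing this with the canonical homomorphism $\Adconj(Q)\to\Inn(Q)$ (sending $|q|_Q\mapsto S_q$, which exists because $q\mapsto S_q$ is an augmentation of $Q$) and then with $r_j$ yields
\[
g_{i,j}\colon\Adconj(Q_i)\longrightarrow\Aut(Q_j),\qquad g_{i,j}\bigl(|q|_{Q_i}\bigr)=S_q|_{Q_j}\quad(q\in Q_i).
\]
When $j=i$ this composite equals the canonical homomorphism $\phi_i$: both are group homomorphisms taking each generator $|q|_{Q_i}$ to the symmetry of $Q_i$ at $q$ — namely $S_q|_{Q_i}$, which is exactly that symmetry since $Q_i$ is a subquandle — and the generators generate.

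Next I would check that $M=(g_{i,j})$ is a mesh. The requirement $g_{i,i}=\phi_i$ is the paragraph above. For the two compatibility conditions of Theorem~\ref{meshcriterion}, the point is that every occurrence of a homomorphism $g_{a,b}$ there is applied to an element of the form $|w|_{Q_a}$ with $w\in Q_a$ (this persists under the expansions that appear, as the relevant subexpressions again lie in $Q_a$), so such an occurrence evaluates to $S_w|_{Q_b}$. Substituting throughout, and using $x\cdot(S_w|_{Q_b})=S_w(x)=x\thru w$ together with $y\,g_{c,d}(|z|_{Q_c})=S_z(y)=y\thru z$, both conditions collapse to the single identity $(x\thru y)\thru z=(x\thru z)\thru(y\thru z)$, i.e.\ axiom Q3 of $Q$, applied to triples $x,y,z$ whose orbit memberships are dictated by the indices. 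As Q3 holds for all triples in $Q$, $M$ is a mesh.

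Finally, the orbits partition $Q$, so the identity on underlying sets is a bijection $Q\to\coprod_{i=1}^n Q_i$. Under it, for $x\in Q_j$ and $y\in Q_i$ the operation of $\#(Q_1,\dots,Q_n,M)$ from Definition~\ref{semi-disjoint} is
\[
x\thru_{\#}y \;=\; x\,g_{i,j}\bigl(|y|_{Q_i}\bigr) \;=\; x\cdot\bigl(S_y|_{Q_j}\bigr) \;=\; S_y(x) \;=\; x\thru y,
\]
which also covers the case $i=j$, since then $g_{i,i}=\phi_i$ and $x\,\phi_i(|y|_{Q_i})=x\thru_i y=x\thru y$. Hence the bijection is a quandle isomorphism $Q\cong\#(Q_1,\dots,Q_n,M)$, as desired.

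I do not expect a deep obstacle here: the substance of the theorem is the observation that the two mesh axioms of Theorem~\ref{meshcriterion}, once the $g$'s are evaluated on generators — which is all that ever happens in them — are exactly the self-distributive law Q3 read across the orbit decomposition. The only step needing some care is the bookkeeping in that reduction: tracking which orbit each of $x,y,z$ belongs to, and confirming that the nested subexpressions $u\,g_{c,a}(|v|)$ indeed land back in $Q_a$ so that every $g$ continues to be applied to a generator. That, together with noting that the restriction maps $r_j$ are well defined (immediate from $\Inn(Q)$-invariance of the orbits), is all the care the argument requires.
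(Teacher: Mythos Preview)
The paper does not give its own proof of this theorem: it is quoted from \cite{EGTY2008} and stated without argument, so there is nothing in the present paper to compare against directly.

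That said, your proposal is correct and is essentially the natural construction one expects from \cite{EGTY2008}. The map $g_{i,j}$ you build via $\Adconj(Q_i)\to\Adconj(Q)\to\Inn(Q)\to\Aut(Q_j)$ is exactly the right one, the verification that $g_{i,i}=\phi_i$ is fine, and your reduction of both mesh conditions in Theorem~\ref{meshcriterion} to axiom Q3 is accurate: with $x\in Q_i$, $y\in Q_j$, $z\in Q_i$ in condition~(1) and $x\in Q_i$, $y\in Q_j$, $z\in Q_k$ in condition~(2), both sides unwind to $(x\thru y)\thru z$ and $(x\thru z)\thru(y\thru z)$ respectively, using throughout that orbits are $\Inn(Q)$-invariant so every intermediate element lands where it should. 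The final identification of the semi-disjoint union with $Q$ via the identity map is immediate. Nothing is missing.
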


\section{Subquandle Lattices}

For a quandle $Q$ and $A\subseteq Q$, we write $\qgen{A}$ to denote the subquandle of $Q$ generated by $A$. As shown by Saki and Kiani in \cite{SakiKiani2021}, the set of subquandles of any quandle $Q$ under inclusion forms a lattice, which we denote by $\cL(Q)$. In particular, the two lattice operations \textit{meet} and \textit{join}, which will be described momentarily, are well-defined. Given two subquandles $Q_1, Q_2\sq Q$, their meet is $Q_1\wedge Q_2 = Q_1\cap Q_2$ and their join is $Q_1\vee Q_2 =  \qgen{Q_1\cup Q_2}$. Note that $Q_1\wedge Q_2$ is the largest subquandle of in $Q$ contained in $Q_1$ and $Q_2$, while $Q_1\vee Q_2$ is the smallest subquandle containing $Q_1$ and $Q_2$. 
\\\\
It is also worth noting that $Q_1\vee Q_2$ is not necessarily $Q_1\cup Q_2$, as $Q_1 \cup Q_2$ may not be a subquandle of $Q$. For example, if $Q$ is the Tait quandle on $\{1,2,3\}$, $\{1\}\sq Q$ and $\{2\}\sq Q$ but $\{1\}\vee \{2\} = Q$.
\begin{definition}
The subquandle lattice $\cL(Q)$ of a quandle $Q$ is \textit{complemented} if for any $Q_1\sq Q$, there exists some $Q_2\sq Q$ such that $Q_1\vee Q_2 = Q$ and $Q_1 \wedge Q_2 = \emptyset$. 
\end{definition}

In \cite{SakiKiani2021}, Saki and Kiani proved the following results.

\begin{theorem}\cite{SakiKiani2021}
\label{sakikianicomplemented}
The subrack lattice of a finite rack is complemented.
\end{theorem}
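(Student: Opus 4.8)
The plan is to reduce the rack case to that of finite quandles, and then to induct on cardinality, peeling off $\Inn$-orbits via the decomposition of Theorem~\ref{orbitDecomp}.

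\emph{Reduction to quandles.} In a finite rack $R$, axiom Q3 yields $S_{x\thru y}=S_y S_x S_y^{-1}$ and $S_{a\thru a}=S_a$; together these force $S_{S_a^{k}(a)}=S_a$ for every $a\in R$ and every $k\in\Z$. Consequently the principal subrack $\qgen{a}$ equals the finite set $O_a:=\{S_a^{k}(a):k\in\Z\}$, every symmetry of $R$ restricts to $S_a$ on $O_a$, and $(O_a,\thru|_{O_a})$ is isomorphic to the ``cyclic rack'' on $\Z/|O_a|\Z$ with $i\thru j=i+1$. The sets $O_a$ partition $R$; every subrack is a union of them; each $O_a$ is an atom of $\cL(R)$ (so $\cL(R)$ is atomistic); and collapsing each $O_a$ to a point gives a surjective rack homomorphism onto a finite \emph{quandle} $\bar R$ with $\cL(R)\cong\cL(\bar R)$. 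So it suffices to complement subquandles of a finite quandle $Q$, where now $\qgen{a}=\{a\}$.

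\emph{Induction.} Induct on $|Q|$; $S=\emptyset$ and $S=Q$ are trivial. If $Q$ is not connected, Theorem~\ref{orbitDecomp} lets us write $Q=Q'\#Q''$ with $Q',Q''$ proper nonempty subquandles whose underlying sets partition $Q$ (group the $\Inn$-orbits into two nonempty blocks). With $S'=S\cap Q'$ and $S''=S\cap Q''$, the inductive hypothesis furnishes complements $C'\sq Q'$ of $S'$ and $C''\sq Q''$ of $S''$; then $C:=C'\cup C''$ is disjoint from $S$, and $S\vee C\supseteq(S'\vee C')\cup(S''\vee C'')=Q'\cup Q''=Q$. The remaining point is that $C$ must itself be a subquandle of $Q$: one needs $C'$ stable under the automorphisms $g_{j,1}(|c''|)$ that the mesh induces for $c''\in C''$, and symmetrically for $C''$. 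Using the compatibility conditions of Theorem~\ref{meshcriterion} together with the latitude in choosing the $C_i$, one arranges these stability conditions; this is the technical core of the non-connected case.

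\emph{Connected case and main obstacle.} For $Q$ connected, take $C$ maximal among subquandles of $Q$ disjoint from $S$ (possible since $Q$ is finite); it then suffices to show $S\vee C=Q$. Suppose not, and pick $a\in Q\setminus(S\vee C)$. Since $a\notin C$, maximality of $C$ forces $\qgen{C\cup\{a\}}$ to meet $S$ — so one cannot just adjoin an arbitrary point; e.g.\ $\qgen{\{(12),(13)\}}$ in the transposition quandle of $S_n$ recaptures $(23)$. The crux — the step I expect to be the main obstacle — is to show that for a \emph{suitable} choice of $a\in Q\setminus(S\vee C)$ (for instance one minimizing $|\qgen{(S\vee C)\cup\{a\}}|$, or chosen inside a carefully selected orbit), the subquandle $\qgen{C\cup\{a\}}$ remains disjoint from $S$, contradicting the maximality of $C$. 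Finiteness is indispensable here: for infinite quandles this step genuinely fails, which is exactly the content of Saki and Kiani's example of an infinite rack with non-complemented subrack lattice.
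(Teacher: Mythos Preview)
The paper does not supply its own proof of this theorem; it is quoted from \cite{SakiKiani2021}. Later (Theorem~\ref{finiteComplemented}) the paper records the two lemmas on which Saki--Kiani's argument rests: (i) in a finite rack the intersection of all maximal subracks is empty, and (ii) for subracks $R_1,R_2\sq R$, the subrack $R_1$ is complemented inside $\cL(R_1\vee R_2)$ by some subrack contained in $R_2$. Taking $R_2=R$ in (ii) already yields the corollary. Neither ingredient appears in your sketch, so your route via orbit decomposition is genuinely different from the one the paper invokes --- and, more to the point, it does not close.

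\emph{Non-connected case.} You assert that, with ``latitude in choosing the $C_i$,'' one can arrange $C'\cup C''$ to be a subquandle of $Q$. This fails already for $Q=T(\Z/4\Z)$, whose $\Inn$-orbits are $Q'=\{0,2\}$ and $Q''=\{1,3\}$ (each a two-element trivial quandle). Take $S=\{0\}$, so $S'=\{0\}$ and $S''=\emptyset$. The \emph{only} complements available inductively are $C'=\{2\}$ and $C''=\{1,3\}$; there is no latitude. But $C'\cup C''=\{1,2,3\}$ is not a subquandle, since $2\dthru 1=0$. (A genuine complement of $\{0\}$ in $Q$ is $\{1\}$, which is not a union of orbitwise complements.) The mesh compatibility conditions of Theorem~\ref{meshcriterion} constrain the homomorphisms $g_{ij}$, not the subquandles $C_i$, and give you no leverage here; the inductive step as written does not go through.

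\emph{Connected case.} You explicitly flag the key step --- finding $a\notin S\vee C$ with $\qgen{C\cup\{a\}}$ still disjoint from $S$ --- as ``the main obstacle'' and leave it unproved. Without it there is no argument in the connected case at all. Saki--Kiani's route via maximal subracks and item~(ii) above sidesteps precisely this difficulty, which is why the paper cites those lemmas rather than attempting an orbit-decomposition induction.
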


\subsection{Strongly Complemented Subquandles}

In this section, we investigate a special class of complemented subquandles: subquandles whose set-theoretic complement is also a subquandle. As we will show, these subracks are particularly interesting in light of \cite{EGTY2008}.

\begin{definition}
Let $Q$ be a quandle and $Q'\sq Q$. Then $Q'$ is \textit{strongly complemented} in $Q$ if $Q\setminus Q' \sq Q$. 
\end{definition}

To motivate this definition, let us consider three examples. 

\begin{example}~
\begin{enumerate}
    \item Consider the integers $\Z$ under the dihedral operation $\dthru$ as described in Example \ref{dihedralKei}. Here, the even integers form a subquandle, as do the odd integers. Thus, both the even integers and the odd integers are strongly complemented subquandles of $\Z$.
    \item Let $G$ be a group, and let $\{C_\alpha\}_{\alpha\in\Lambda}$ be any collection of conjugacy classes of $G$. Because the conjugacy class of $G$ form a partition of $G$, $\bigcup_{\alpha\in\Lambda} C_\alpha \sq \Conj (G)$ is a strongly complemented subquandle of $G$.
    \item For a quandle $Q$, any union of orbits under the action of the inner automorphism group $\Inn(Q)$ is strongly complemented in $Q$. 
\end{enumerate} 

\end{example}


We will prove that strongly complemented subquandles of a quandle $Q$ can also be described in terms of actions of $\Inn(Q)$ --- this description will be made explicit in Theorem \ref{equivalenceTheorem}. Before proceeding, we describe the two different actions of $\Inn(Q)$ which will be used throughout this paper. We shall make extensive use of both of these actions, often together, so it is essential to have an efficient way of distinguishing between them.
\\\\
The first action is the usual right action $\cdot: Q\times \Inn(Q)\to Q$ given by $q\cdot \sigma = \sigma(q).$ As described previously, the action of $\Inn(Q)$ creates an orbit for each $q\in Q$, which we denote by $q\cdot \Inn(Q)$. In a similar way, for $Q'\sq Q$, we use the notation $Q'\cdot \Inn(Q)$ to refer to the union of the orbits of elements of $Q'$, i.e., $Q'\cdot\Inn(Q) = \bigcup_{q\in Q'} q\cdot \Inn(Q).$ We refer to this as the \textit{orbit closure} of $Q'$. There is more to this name than simply the fact that the orbit closure is closed under the action of $\Inn(Q)$. Recall:

\begin{definition}
    A {\em closure operation} on a partially ordered set $(X,\leq)$ is a function $\mathrm{cl}:X\rightarrow X$ which is inflationary, that is $x \leq \mathrm{cl}(x)$, and idempotent, that is $\mathrm{cl}(\mathrm{cl}(x)) = \mathrm{cl}(x)$.
\end{definition}

We then have:

\begin{proposition}  Orbit closure is a closure operation on the lattice of subquandles.
\end{proposition}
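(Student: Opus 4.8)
The plan is to verify directly that the map $Q' \mapsto Q'\cdot\Inn(Q)$ on the lattice $\cL(Q)$ is (a) well-defined, i.e., sends subquandles to subquandles, (b) inflationary, and (c) idempotent. Well-definedness was already observed in the text (orbits and unions of orbits are subquandles), and inflationarity is immediate: since each $q \in Q'$ lies in its own orbit $q\cdot\Inn(Q)$ (because $\Id \in \Inn(Q)$), we have $Q' \subseteq Q'\cdot\Inn(Q)$, so $Q' \sq Q'\cdot\Inn(Q)$ in $\cL(Q)$.

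The substantive step is idempotence: showing $(Q'\cdot\Inn(Q))\cdot\Inn(Q) = Q'\cdot\Inn(Q)$. The inclusion $\supseteq$ follows from inflationarity applied to $Q'\cdot\Inn(Q)$. For the reverse inclusion, I would unwind the definitions: an element of the left-hand side has the form $\sigma(q\cdot\tau) $ hmm, more carefully, it has the form $p\cdot\sigma$ where $p \in Q'\cdot\Inn(Q)$ and $\sigma \in \Inn(Q)$; write $p = q\cdot\tau$ for some $q\in Q'$ and $\tau\in\Inn(Q)$. Then $p\cdot\sigma = q\cdot(\tau\sigma)$, and since $\tau\sigma \in \Inn(Q)$ (as $\Inn(Q)$ is a group), this element lies in $q\cdot\Inn(Q) \subseteq Q'\cdot\Inn(Q)$. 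Hence the closure axioms hold.

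I do not anticipate a genuine obstacle here; the only mild subtlety is bookkeeping between the "action applied to a subquandle" notation $Q'\cdot\Inn(Q)$ and the pointwise orbit notation, plus being careful that $\Inn(Q)$ acting on the right means $q\cdot(\tau\sigma) = \sigma(\tau(q)) = (q\cdot\tau)\cdot\sigma$, which is exactly the associativity of a right group action. One could phrase the whole argument in one line: orbit closure is the "saturation" of a subset under a group action, and saturations under any group action are always idempotent because orbits of orbit-points are contained in the original orbits. I would write it out at the level of the displayed computation above rather than invoking a general principle, since the paper's audience will want to see the quandle-specific notation handled explicitly.
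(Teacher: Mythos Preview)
Your argument is correct and matches exactly what the paper has in mind: the paper in fact states this proposition without proof, treating it as immediate from the definitions, and your verification of well-definedness, inflationarity via $\Id\in\Inn(Q)$, and idempotence via associativity of the right action is precisely the routine check being left to the reader.
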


For the second action, suppose $Q' \sq Q$. The second action is the right action $\cdot: \cL(Q)\times \Inn(Q)\to \cL(Q)$ given by $Q'\cdot \sigma = \sigma[Q']$, or the image of $Q'$ under $\sigma$. Thus, we use the notation $[Q']\cdot \Inn(Q)$ to refer to the set of images of $Q'$ under elements of $\Inn(Q)$, i.e., $[Q']\cdot \Inn(Q)=\{Q'\cdot \sigma :q\in Q'\}.$ We use this notation to capture the action on $\Inn(Q)$ on the subquandle lattice $\cL(Q).$
\\\\
Finally, for ease of notation, we will write $q\sigma$ instead of $q\cdot\sigma$ and $Q'\sigma$ instead of $Q'\cdot \sigma$, where $q \in Q$ and $Q'\sq Q$. With our two actions defined, we state our classification of strongly complemented subquandles:

\begin{theorem}
\label{equivalenceTheorem}
Let $Q$ be a quandle, and let $Q'\sq Q$. Denote the subquandle lattice of $Q$ by $\mathcal{L}(Q)$. The following are equivalent:
\begin{enumerate}
    \item $Q\setminus Q'\sq Q$,
    \item $Q'$ is a union of orbits under the action of $\Inn(Q)$ on $Q$,
    \item $Q'$ is a fixed point of the action of $\Inn(Q)$ on $\mathcal{L}(Q)$,
    \item $Q = \#(Q',Q\setminus Q', M)$ for a mesh $M$.
\end{enumerate}
\end{theorem}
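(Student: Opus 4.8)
The plan is to prove the four statements equivalent via the cycle $(2)\Rightarrow(3)\Rightarrow(1)\Rightarrow(2)$ plus a separate $(2)\Leftrightarrow(4)$, since $(4)$ is most naturally tied to the orbit-decomposition theorem of Ehrman et al. First I would establish $(2)\Rightarrow(3)$: if $Q'=\bigcup_{\alpha}O_\alpha$ is a union of $\Inn(Q)$-orbits, then for any $\sigma\in\Inn(Q)$ each $\sigma[O_\alpha]=O_\alpha$ (orbits are $\Inn(Q)$-stable setwise), so $\sigma[Q']=Q'$, i.e. $Q'$ is a fixed point of the action on $\cL(Q)$. Conversely $(3)\Rightarrow(2)$ is immediate: if $\sigma[Q']=Q'$ for all $\sigma$, then $Q'$ is closed under all inner automorphisms, hence contains the full orbit $q\cdot\Inn(Q)$ of each of its elements, so $Q'=\bigcup_{q\in Q'}q\cdot\Inn(Q)$ is a union of orbits. (This already shows $(2)\Leftrightarrow(3)$, which is essentially bookkeeping with the two actions introduced just above the theorem.)

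Next I would handle $(1)\Rightarrow(2)$, which I expect to be the crux. Suppose $C:=Q\setminus Q'$ is a subquandle; I want to show $Q'$ is $\Inn(Q)$-stable, equivalently $S_y[Q']=Q'$ for every generator $S_y$ (and $S_y^{-1}$) of $\Inn(Q)$. Since each $S_y$ is a bijection of $Q$, it suffices to show $S_y[Q']\subseteq Q'$, i.e. that $x\thru y\in Q'$ whenever $x\in Q'$. Split on where $y$ lies. If $y\in Q'$ then $x\thru y\in Q'$ by closure of $Q'$. If $y\in C$, I argue by contradiction: suppose $x\thru y\in C$ for some $x\in Q'$. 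By Q2, $x=(x\thru y)\bthru y$; since $x\thru y\in C$ and, I claim, $C$ is closed under $\bthru$ as well (a subquandle of a quandle is closed under $\bthru$ because $\bthru$ is definable from $\thru$ via the bijectivity of $S_y$ — concretely $z\bthru y = S_y^{-1}(z)$ and one shows $S_y^{-1}$ restricts to $C$ using finiteness of orbits... which need not hold). Here is the genuine obstacle: $C$ being closed under $\thru$ does not a priori give closure under $\bthru$ on an infinite quandle. I would resolve this by the following cleaner route instead: show directly that if $x\in Q'$ and $y\in C$ then $x\thru y\in Q'$, by contradiction with the roles reversed — assume $x\thru y=:c\in C$; then $x\in Q'$ and we also have $x\thru y\in C$, $y\in C$, so consider $c\bthru y = x$. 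We need $c\bthru y$ to be forced into $C$. The honest fix is to note that $Q'$ and $C$ partition $Q$, each is closed under $\thru$, and each right-translation $S_y$ is a bijection of $Q$ that maps $Q'\to Q'\cup C$; since $S_y$ is injective and $S_y[C]\subseteq C$ (closure of $C$) and $S_y[Q']\subseteq Q'\cup C$, a cardinality/bijectivity argument shows $S_y[Q']\subseteq Q'$ precisely when $S_y[C]=C$ — and $S_y[C]=C$ follows because $S_y$ is a bijection restricting to an injection $C\hookrightarrow C$ whose complement... again needs surjectivity. So the real content is: $S_y$ bijective on $Q$, $S_y[Q']\subseteq Q'$ OR $S_y[C]\subseteq C$ forces both; one gets this by observing $S_y[Q']\cap S_y[C]=\emptyset$, $S_y[Q']\cup S_y[C]=Q$, $S_y[C]\subseteq C$, hence $S_y[Q']\supseteq Q\setminus C = Q'$, giving $S_y[Q']=Q'$. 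That is the argument I would write, and it is clean and infinite-safe. The symmetric statement with $S_y^{-1}$ (or equivalently applying the above to the inverse quandle) gives $\Inn(Q)$-stability of both $Q'$ and $C$, hence $(2)$.

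Finally, $(2)\Leftrightarrow(4)$. For $(2)\Rightarrow(4)$: if $Q=Q'\sqcup C$ with both unions of orbits, apply Theorem \ref{orbitDecomp} to write $Q\cong\#(O_1,\dots,O_n,M)$ with the orbits grouped so that $O_1,\dots,O_k$ exhaust $Q'$ and the rest exhaust $C$; then I would check that the ``block'' matrix built from $M$ by amalgamating the first $k$ rows/columns and the last $n-k$ rows/columns — using that $Q'$ and $C$ are themselves quandles under the restricted operations — is a mesh for the two-component decomposition $\#(Q',C,M')$, so $Q\cong\#(Q',C,M')$; this amalgamation step is where one verifies the mesh conditions of Theorem \ref{meshcriterion} survive coarsening, which is routine but needs to be spelled out. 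For $(4)\Rightarrow(2)$: if $Q=\#(Q',C,M)$, then by the definition of $\#$ the underlying set is $Q'\sqcup C$ with $x\thru y = x\,g_{i,j}(|y|)$ landing in the same component as $x$ (since each $g_{i,j}$ maps into $\Aut$ of the relevant component, which is stable); hence every orbit of $\Inn(Q)$ is contained in a single component, so $Q'$ and $C$ are each unions of orbits, which is $(2)$. Assembling the implications $(2)\Rightarrow(3)\Rightarrow(2)$, $(1)\Rightarrow(2)$, $(2)\Rightarrow(1)$ (which is immediate: a union of orbits has union-of-orbits complement, and a union of orbits is a subquandle by the observation right before Theorem \ref{orbitDecomp}), and $(2)\Leftrightarrow(4)$ closes the equivalence. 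The main obstacle, as flagged, is the infinite-cardinality subtlety in $(1)\Rightarrow(2)$ — getting $S_y[Q']=Q'$ from mere closure under $\thru$ without invoking finiteness — and the partition/bijectivity argument above is the way I would navigate it.
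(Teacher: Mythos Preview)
Your cycle of implications and the substance of each step match the paper's proof: the paper also proves $(2)\Leftrightarrow(3)$ directly, derives $(1)\Rightarrow(2)$ by contradiction using closure of $Q\setminus Q'$ under $\bthru$, gets $(2)\Rightarrow(1)$ from ``complement of a union of orbits is a union of orbits,'' and invokes the orbit-decomposition theorem for $(1)\Leftrightarrow(4)$.

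The one place you tie yourself in knots is unnecessary. Your worry that ``$C$ being closed under $\thru$ does not a priori give closure under $\bthru$'' is misplaced: hypothesis $(1)$ says $C=Q\setminus Q'$ is a \emph{subquandle}, i.e.\ a quandle under the restricted operations, so by axiom Q2 it is automatically closed under $\bthru$. This is exactly what the paper uses in one line: ``Because $Q\setminus Q'$ is a subquandle, it is closed under $\bthru$, so $q=(q\thru y)\bthru y\in Q\setminus Q'$,'' a contradiction. Your partition/bijectivity detour is therefore superfluous, and as written it contains a premature step: from $S_y[C]\subseteq C$ and bijectivity of $S_y$ you correctly obtain $S_y[Q']\supseteq Q'$, but ``giving $S_y[Q']=Q'$'' does not follow until you invoke $S_y^{-1}$ in the next sentence---and that step (namely $S_y^{-1}[C]\subseteq C$ for $y\in C$) is precisely closure of $C$ under $\bthru$. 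So you end up using the very fact you were trying to avoid; once you accept it, the paper's two-line argument is all you need. For $(2)\Leftrightarrow(4)$ you are more explicit than the paper (which simply asserts the orbit-decomposition theorem extends to unions of orbits), but the content is the same.
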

\begin{proof} 
    \item[(2) $\Leftrightarrow$ (3):] First, suppose that $Q'=Q\cdot \Inn(Q)$, so $Q' = \bigcup_{i\in \cI} Q_i$ for some indexing set $\cI$ and each $Q_i$ an orbit of the action of $\Inn(Q)$ on $Q$. Now, pick any $x\in Q'$ and any $\sigma\in \Inn(Q)$. Then, $x\in Q_i$ for some $i \in \cI$, and $x\sigma \in Q_i\subseteq Q'$ because $Q_i$ is an orbit. 
    Since $x\in Q'$ was arbitrary, $Q' \sigma \subseteq Q'$. Also, since $\sigma$ was an arbitrary inner automorphism, $Q'\sigma^{-1}\subseteq Q'$, which implies $x\sigma^{-1}\in Q'$. Also, $x=(x\sigma^{-1})\sigma$, so $Q'\subseteq Q'\sigma$. Thus, since $\sigma \in \Inn(Q)$ was arbitrary, $[Q']\cdot\Inn(Q) = \{Q'\sigma:q \in Q'\} = \{Q'\}$, as desired.
    \\\\
    Now suppose that $[Q']\cdot\Inn(Q) = \{Q'\}$, and pick any $x\in Q'$. Because $x\sigma \in Q'$ for any $\sigma \in \Inn(Q)$, $x\cdot \Inn(Q)=\{x \sigma:\sigma \in \Inn(Q)\}\subseteq Q'$. Since $x\in Q'$ was arbitrary, $Q'\cdot \Inn(Q) \subseteq Q'.$
    Moreover, $Q'\subseteq Q\cdot \Inn(Q),$ as each $x =x\cdot \mathrm{id}_{\Inn(Q)}\in x \cdot \Inn(Q)$. Thus, $Q'=Q\cdot\Inn(Q)$, as desired.
    
    Before proceeding, we establish a useful lemma:
    \begin{lemma}
    If $Q'$ is not a fixed point under the action of $\Inn(Q)$ on $\cL(Q)$, then there exists some $x\in Q\setminus Q'$ such that $Q'\cdot S_x\neq Q'$. In other words, there exists some $x \in Q\setminus Q'$ and some $y\in Q'$ such that $y\thru x\not\in Q'$. 
    \end{lemma}

\begin{proof}
Since $[Q']\cdot\Inn(Q)\neq\{Q'\}$, by definition, there exists some $\sigma\in \Inn(Q)$ such that $Q'\cdot \sigma \ne Q'$. Now, assume for the sake of contradiction that all of the symmetries of elements in $Q$ fix $Q'$. Then the composition of any number of symmetries and symmetry inverses must also fix $Q'$. Hence, all elements of $\Inn(Q)$ must fix $Q'$, a contradiction. So, there must exist some $x$ whose symmetry does not fix $Q'$. Accordingly, there exists some $y\in Q'$ such that $y\thru x\not\in Q'$. Clearly, $x$ cannot be in $Q'$ as $Q'$ is a subquandle and is closed under $\thru$. Thus, $x\in Q\setminus Q'$, as desired.
\end{proof}
    (2) $\Leftrightarrow$ (1): If $Q'$ is a union of orbits, so is $Q\setminus Q'$. Since unions of orbits are subquandles, $Q'$ is strongly complemented. For the converse, assume for the sake of contradiction that $Q'\sq Q$ is strongly complemented but not a union of orbits. Because $Q'$ is not a union of orbits, $Q'$ is not a fixed point under the action of $\Inn(Q)$ on $\cL(Q)$. Hence, by the previous lemma, there exists some $q\in Q'$ and some $y\in Q\setminus Q'$ such that  $q\thru y\in Q\setminus Q'$. Because $Q\setminus Q'$ is a subquandle, it is closed under $\bthru$, so $q=(q\thru y) \bthru y\in Q\setminus Q'$. Hence, $q \in Q'\cap (Q\setminus Q'),$ a contradiction. 
    \\\\
    (1) $\Leftrightarrow$ (4): If $Q'$ is strongly complemented, $Q'$ is a union of orbits. Because Theorem \ref{orbitDecomp} also holds for semidisjoint unions of unions of orbits, we obtain $Q = \#(Q',Q\setminus Q', M)$ for some mesh $M$. The converse follows directly from the definition of the semidisjoint union.
\end{proof}
As an immediate corollary, we have that the only strongly complemented subquandles of a connected quandle $Q$ are $Q$ and the empty quandle, as $Q$ is only orbit of the action of $\Inn(Q)$ on $Q$. Another easy corollary is that the the sublattice consisting of strongly complemented subquandles is boolean algebra.

\subsection{Chains of Strongly Complemented Subquandles}

We now consider chains of strongly complemented quandles. Theorem 4.9 below shows that any subquandle nested between a strongly complemented subquandle and its ambient quandle is also strongly complemented. We also also provide a transitivity criterion for strong complementedness in Theorem 4.13. First, we establish a useful lemma of independent interest:
\begin{lemma}
\label{Subquotient}
Let $Q$ be a quandle and suppose $Q'\sq Q$. Then, there exists some subgroup $S_{Q'}\le \Inn(Q)$ and some normal subgroup $K_{Q'}\normal S_{Q'}$ such that $\Inn(Q')\cong S_{Q'}/K_{Q'}$.
\end{lemma}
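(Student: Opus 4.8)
The natural approach is to produce a group homomorphism whose domain is a subgroup $S_{Q'}$ of $\Inn(Q)$ and whose image is exactly $\Inn(Q')$; then $K_{Q'}$ is the kernel and the first isomorphism theorem finishes the job. The key observation is that for each $y \in Q'$, the symmetry $S_y \in \Inn(Q)$ restricts to the symmetry of the subquandle $Q'$ at $y$ — provided $Q'$ is invariant under $S_y$, which it is by Q1--Q3 since $Q'$ is closed under $\thru$ and $\bthru$. So $Q'$ is preserved setwise by every $S_y$ with $y \in Q'$, hence by the subgroup they generate.

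Concretely, I would let $S_{Q'} \le \Inn(Q)$ be the subgroup generated by $\{S_y : y \in Q'\}$, i.e. the pointwise stabilizer of the condition ``$[Q'] \cdot \sigma = Q'$'' — actually better: let $S_{Q'}$ be the subgroup of $\Inn(Q)$ consisting of those inner automorphisms that arise as composites of $S_y^{\pm 1}$ for $y \in Q'$. Every such $\sigma$ maps $Q'$ into itself (and bijectively, since each generator does and inverses are available), so restriction gives a well-defined group homomorphism $\mathrm{res}: S_{Q'} \to \Aut(Q')$, $\sigma \mapsto \sigma|_{Q'}$. Its image contains every symmetry $S_y|_{Q'}$ of $Q'$ for $y \in Q'$, hence contains all of $\Inn(Q')$; conversely, since $S_{Q'}$ is generated by the $S_y^{\pm 1}$, the image is contained in $\Inn(Q')$. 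Thus $\mathrm{im}(\mathrm{res}) = \Inn(Q')$. Setting $K_{Q'} = \ker(\mathrm{res}) \normal S_{Q'}$, the first isomorphism theorem yields $\Inn(Q') \cong S_{Q'}/K_{Q'}$, as desired.

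The only genuine subtlety — and the step I'd treat most carefully — is checking that restriction is well-defined as a map into $\Aut(Q')$: one must verify both that $\sigma(Q') \subseteq Q'$ for each $\sigma \in S_{Q'}$ (induction on word length in the generators $S_y^{\pm 1}$, using that $Q'$ is closed under $\thru$ and $\bthru$) and that $\sigma|_{Q'}$ is actually a bijection of $Q'$ and not merely an injection (it is, since $\sigma^{-1} \in S_{Q'}$ too and also preserves $Q'$). After that, homomorphism-ness of $\mathrm{res}$ is immediate from $(\sigma\tau)|_{Q'} = \sigma|_{Q'} \circ \tau|_{Q'}$, and the image computation is a one-line generator-chasing argument. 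No estimate or hard construction is needed; the lemma is essentially the standard ``restriction of a group action to an invariant subset induces a subquotient'' fact, packaged for quandles.
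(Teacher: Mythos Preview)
Your proposal is correct and follows essentially the same approach as the paper: define $S_{Q'}=\langle S_y : y\in Q'\rangle\le\Inn(Q)$, show the restriction map $\sigma\mapsto\sigma|_{Q'}$ is a well-defined surjective group homomorphism onto $\Inn(Q')$, and apply the first isomorphism theorem with $K_{Q'}$ the kernel. The paper organizes the well-definedness, surjectivity, and homomorphism checks as three separate claims, but the content and the care taken with the subtlety you flag (that each generator and its inverse preserve $Q'$ so restriction makes sense and lands in $\Inn(Q')$) are the same.
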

In other words, the inner automorphism group of a subquandle is a subquotient of the inner automorphism group of the ambient quandle. 
\begin{proof}
Define $S_{Q'}:=\gen{\{S_{q}: q\in Q'\}}\leq \Inn(Q)$, where each $S_{q}\in \Inn(Q).$ Consider the restriction map $\tau:S_{Q'}\to \Inn(Q')$ defined by $\tau(f)=f|_{Q'}.$
\begin{claim} $\tau$ is well-defined. \end{claim}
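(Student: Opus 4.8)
The claim to prove is that the restriction map $\tau:S_{Q'}\to \Inn(Q')$, $\tau(f)=f|_{Q'}$, is well-defined; that is, whenever $f\in S_{Q'}\le \Inn(Q)$, the restriction $f|_{Q'}$ actually maps $Q'$ to $Q'$ and lies in $\Inn(Q')$.

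My plan:

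\textbf{The plan is to} check both obligations hidden in ``well-defined'': (a) $f|_{Q'}$ is a well-defined function $Q'\to Q'$, i.e.\ $f$ preserves the subset $Q'$; and (b) the resulting automorphism of $Q'$ actually lies in $\Inn(Q')$, not merely in $\Aut(Q')$. The natural route for both is to argue on generators of $S_{Q'}$ and then extend multiplicatively. Recall $S_{Q'}=\langle\{S_q : q\in Q'\}\rangle$.

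\textbf{First I would} handle the generators. For $q\in Q'$, the symmetry $S_q$ of $Q$ sends $x\mapsto x\thru q$. If $x\in Q'$ then $x\thru q\in Q'$ since $Q'$ is a subquandle (closed under $\thru$); similarly $S_q^{-1}$ sends $x\mapsto x\bthru q$, and $Q'$ is closed under $\bthru$ too (being a quandle in its own right). So each generator and its inverse preserve $Q'$ setwise. \textbf{Then I would} observe that the set of elements of $\Inn(Q)$ preserving $Q'$ setwise is a subgroup (closed under composition and inverse), hence contains all of $S_{Q'}$. This gives (a): for every $f\in S_{Q'}$, $f|_{Q'}:Q'\to Q'$ is well-defined, and it is a bijection (its inverse is $f^{-1}|_{Q'}$) and a quandle homomorphism (restriction of one), hence $f|_{Q'}\in\Aut(Q')$.

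\textbf{For (b)}, note that the restriction of the symmetry $S_q$ (for $q\in Q'$) to $Q'$ is exactly the symmetry at $q$ in the quandle $Q'$, i.e.\ $\tau(S_q)=S_q^{Q'}\in\Inn(Q')$. Since $\Inn(Q')$ is by definition generated by these symmetries $\{S_q^{Q'}:q\in Q'\}$, and $\tau$ (once we know it lands in $\Aut(Q')$) is visibly a group homomorphism (restriction commutes with composition), the image $\tau(S_{Q'})$ is generated by $\{\tau(S_q):q\in Q'\}=\{S_q^{Q'}:q\in Q'\}$, which is precisely $\Inn(Q')$. In particular every $\tau(f)$ lies in $\Inn(Q')$, completing well-definedness.

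\textbf{The main obstacle} is really just keeping the two separate senses of ``well-defined'' apart and making sure part (b) is not circular: one must first establish that restriction gives an automorphism of $Q'$ (so that ``$\tau$ is a homomorphism'' even makes sense as a statement into $\Aut(Q')$) before invoking the homomorphism property to conclude the image sits inside the subgroup $\Inn(Q')$. There is no hard computation; the subtlety is purely organizational. (As a side remark, this argument also sets up the natural candidate $K_{Q'}=\ker\tau$ for the normal subgroup in the lemma, and surjectivity of $\tau$ onto $\Inn(Q')$ — which the generator computation just gave — yields $\Inn(Q')\cong S_{Q'}/K_{Q'}$.)
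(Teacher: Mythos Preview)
Your proposal is correct and follows essentially the same approach as the paper: show that generators $S_q$ (and their inverses) preserve $Q'$ setwise because $Q'$ is closed under $\thru$ and $\bthru$, then push this through products to land in $\Inn(Q')$. The only difference is presentational: the paper states two ``facts'' about how restriction interacts with composition and inverses and then writes out an explicit decomposition $f=S_{x_n}^{\epsilon_n}\cdots S_{x_1}^{\epsilon_1}$ to compute $\tau(f)$ directly, whereas you package the same content more abstractly (the setwise stabilizer of $Q'$ is a subgroup; restriction is a homomorphism into $\Aut(Q')$; generators map to generators of $\Inn(Q')$). Your version in fact anticipates the paper's subsequent Claims (surjectivity and homomorphism), which the paper chooses to prove separately.
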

\begin{proof}
For $z\in Q$ and $\epsilon\in \{-1,1\}$, we write $S_z^{\epsilon}$ to mean the symmetry $(x\mapsto x\thru^{\epsilon}z) \in \Inn(Q).$ We need two fairly obvious properties of bijections which preserve a given subset:
First, let $n\in\N$, and let $Q'\subseteq Q$ be sets. For each $1\leq i\leq n$, let $f_i : Q\to Q$ satisfy $f_i[Q']\subseteq Q'.$ Then,     $$(f_n\circ f_{n-1}\circ\cdots\circ f_1) |_{Q'}=(f_n|_{Q'})\circ (f_{n-1}|_{Q'})\cdots\circ (f_1|_{Q'}).$$
    Note that each restriction $f_i|_{Q'}$ is well-defined by the assumption $f_i[Q']\subseteq Q'.$
Second, let $Q'\subseteq Q$ be sets, and suppose $f: Q \to Q$ is a bijection such that $f[Q']= Q'$. Then $(f|_{Q'})^{-1}=\inv{f}|_{Q'}.$\\\\
The first fact allows us to freely move restrictions to $Q'$ in and out of compositions of symmetries of $Q$. The second fact allows us to freely move inverses of symmetries of $Q$ in and out of restrictions to $Q'$. Now, to show $\tau$ maps every element of $S_{Q'}$ into $\Inn(Q')$, suppose $f\in S_{Q'}$ has a decomposition $f=S_{x_n}^{\epsilon_n}\circ S_{x_{n-1}}^{\epsilon_{n-1}} \cdots \circ S_{x_1}^{\epsilon_1}$
for some $n\in \N$ and each $x_i\in Q'$, $\epsilon_i\in\{-1,1\}$. Then, 
\begin{align*}
    \tau(f) &=  (S_{x_n}^{\epsilon_n}\circ S_{x_{n-1}}^{\epsilon_{n-1}}\circ \cdots \circ S_{x_1}^{\epsilon_1})|_{Q'} \\
    &= (S_{x_n}^{\epsilon_n} |_{Q'})\circ \cdots \circ(S_{x_1}^{\epsilon_1} |_{Q'}) \\
    &= (S_{x_n} |_{Q'})^{\epsilon_n}\circ \cdots \circ(S_{x_1} |_{Q'})^{\epsilon_1} \in \Inn(Q').
\end{align*}
Hence, $\tau[S_{Q'}]\subseteq \Inn(Q')$. 
\end{proof}
\begin{claim}
$\tau$ is surjective.
\end{claim}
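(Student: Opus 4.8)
The plan is to show that the image of $\tau$ already contains a generating set for $\Inn(Q')$, so that surjectivity is forced. Recall that $\Inn(Q')$ is by definition generated by the symmetries of the subquandle $Q'$, i.e.\ by the maps $S'_q \colon Q' \to Q'$, $x \mapsto x \thru q$, for $q \in Q'$. The first thing I would check is that for any $q \in Q'$ the generator $S_q$ of $\Inn(Q)$ lies in $S_{Q'}$ (immediate from the definition $S_{Q'} = \gen{\{S_q : q \in Q'\}}$) and that $\tau(S_q) = S_q|_{Q'} = S'_q$. Indeed, for $x \in Q'$ we have $S_q|_{Q'}(x) = x \thru q$, which lies in $Q'$ precisely because $Q'$ is a subquandle and hence closed under $\thru$; so $S_q|_{Q'}$ is exactly the symmetry at $q$ computed inside $Q'$.

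Next I would record that $\tau$ is a group homomorphism. This follows at once from the first of the two ``obvious properties'' used in the proof of the previous claim: every element of $S_{Q'}$ carries $Q'$ into (in fact onto) $Q'$, so restriction to $Q'$ commutes with composition, giving $\tau(fg) = (fg)|_{Q'} = (f|_{Q'})\circ(g|_{Q'}) = \tau(f)\tau(g)$. In particular $\tau[S_{Q'}]$ is a subgroup of $\Inn(Q')$ (the containment in $\Inn(Q')$ having been established in the previous claim).

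Finally, combining these: $\tau[S_{Q'}]$ is a subgroup of $\Inn(Q')$ that contains every generator $S'_q$, $q \in Q'$, hence $\tau[S_{Q'}] = \Inn(Q')$, which is the asserted surjectivity. Together with the previous claim this exhibits $\tau$ as a surjective group homomorphism $S_{Q'} \twoheadrightarrow \Inn(Q')$, so taking $K_{Q'} := \ker\tau \normal S_{Q'}$ and applying the first isomorphism theorem yields $\Inn(Q') \cong S_{Q'}/K_{Q'}$, completing the proof of Lemma~\ref{Subquotient}. I do not anticipate a genuine obstacle here; the only step demanding care is the identification $\tau(S_q) = S'_q$, which rests entirely on $Q'$ being closed under the quandle operation, so that an ambient symmetry at a point of $Q'$ restricts to the intrinsic symmetry of $Q'$ at that point.
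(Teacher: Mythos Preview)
Your proposal is correct and essentially the same as the paper's: both rest on the observation that each ambient symmetry $S_q$ with $q\in Q'$ restricts to the intrinsic symmetry $S'_q$ of $Q'$, together with the fact that restriction commutes with composition. The only cosmetic difference is that the paper constructs an explicit preimage of an arbitrary $\alpha\in\Inn(Q')$ term-by-term, whereas you invoke the homomorphism property (which the paper defers to a separate subsequent claim) to argue that the image is a subgroup containing a generating set; either packaging is fine.
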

\begin{proof}
Fix $\alpha\in\Inn(Q')$, and suppose $\alpha=T_{x_n}^{\epsilon_n}\circ\cdots\circ T_{x_1}^{\epsilon_1}$, where each $T_{x_i}^{\epsilon_i}\in \Inn(Q')$ is defined via $T_{x_i}^{\epsilon_i}(x):=x \thru^{\epsilon_i} x_i$. Note that $S_{x_n}^{\epsilon_n}\circ \cdots \circ S_{x_1}^{\epsilon_1}$ is an element of $\Inn(Q).$ Moreover,
\begin{align*}
    \tau(S_{x_n}^{\epsilon_n}\circ \cdots \circ S_{x_1}^{\epsilon_1}) &= (S_{x_n}^{\epsilon_n}\circ\cdots\circ S_{x_1}^{\epsilon_1})|_{Q'} \\
    &= S_{x_n}^{\epsilon_n}|_{Q'}\circ\cdots\circ S_{x_1}^{\epsilon_1}|_{Q'} \\ &=(S_{x_n}|_{Q'})^{\epsilon_n}\circ\cdots\circ(S_{x_1}|_{Q'})^{\epsilon_1} \\
    &= T_{x_n}^{\epsilon_n}\circ\cdots\circ T_{x_1}^{\epsilon_1} = \alpha.
\end{align*}
\end{proof}
\begin{claim}
$\tau$ is a group homomorphism.
\end{claim}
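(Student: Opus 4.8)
The plan is to verify directly that $\tau$ respects composition, leveraging the first of the two ``obvious properties of bijections'' already recorded in the well-definedness argument, namely that restriction to a preserved subset commutes with composition. The only genuine input needed is that every element of $S_{Q'}$ maps $Q'$ into itself, and this is essentially what the proof of well-definedness established: any $f \in S_{Q'}$ is a finite composite $f = S_{x_n}^{\epsilon_n} \circ \cdots \circ S_{x_1}^{\epsilon_1}$ with each $x_i \in Q'$ and $\epsilon_i \in \{-1,1\}$, and since $Q'$ is a subquandle it is closed under both $\thru$ and $\bthru$, so each $S_{x_i}^{\epsilon_i}$ satisfies $S_{x_i}^{\epsilon_i}[Q'] \subseteq Q'$; hence so does their composite $f$. (In fact each $S_{x_i}^{\epsilon_i}$ restricts to a bijection of $Q'$ with inverse $S_{x_i}^{-\epsilon_i}|_{Q'}$, so $f[Q'] = Q'$, but set-theoretic containment is all that is needed here.)

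Granting this, I would fix $f, g \in S_{Q'}$. Both $f$ and $g$ preserve $Q'$ in the above sense, so the composition--restriction identity applies to the two-term composite $f \circ g$, giving $(f \circ g)|_{Q'} = (f|_{Q'}) \circ (g|_{Q'})$, which is precisely $\tau(f \circ g) = \tau(f) \circ \tau(g)$. One may also note $\tau(\mathrm{id}) = \mathrm{id}_{Q'}$, so the identity is preserved, though this follows automatically. This establishes that $\tau$ is a group homomorphism.

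With all three claims in hand, the lemma itself follows immediately: $\tau : S_{Q'} \to \Inn(Q')$ is a surjective group homomorphism, so setting $K_{Q'} := \ker \tau \normal S_{Q'}$, the first isomorphism theorem yields $\Inn(Q') \cong S_{Q'}/K_{Q'}$, exhibiting $\Inn(Q')$ as a subquotient of $\Inn(Q)$. I do not anticipate a real obstacle: the content is front-loaded into the well-definedness claim, and the homomorphism property is a one-line consequence of the composition--restriction identity once one records that elements of $S_{Q'}$ preserve $Q'$. The only point requiring a moment's care is invoking that identity with maps that genuinely send $Q'$ into $Q'$ — which is why the preliminary observation about $S_{Q'}$ preserving $Q'$ should be stated explicitly before the computation.
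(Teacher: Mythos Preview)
Your proof is correct and follows essentially the same approach as the paper: both arguments observe that every element of $S_{Q'}$ preserves $Q'$ (as a composite of symmetries and inverse symmetries at points of $Q'$) and then apply the composition--restriction identity to conclude $\tau(f\circ g) = (f\circ g)|_{Q'} = f|_{Q'}\circ g|_{Q'} = \tau(f)\circ\tau(g)$. The paper writes out explicit decompositions of $f$ and $g$ but uses them only to make the same preservation observation you state directly.
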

\begin{proof}
Let $f,g\in S_{Q'}$ be arbitrary, and suppose $f=S_{x_n}^{e_n}\circ \cdots \circ S_{x_1}^{e_1}$ and $g=S_{y_m}^{f_m}\circ \cdots \circ S_{y_1}^{f_1},$ such that $m,n\in \N$, and each $x_i,y_j\in Q'$, $e_i,f_j\in\{-1,1\}$. Since each symmetry fixes $Q'$, $f$ and $g$ must also fix $Q'$, as they are compositions of symmetries and inverse symmetries. Hence, by Fact 1, $\tau(f\circ g)=(f\circ g)|_{Q'}=f|_{Q'}\circ g|_{Q'}=\tau(f)\circ\tau(g)$. 
\end{proof}
Therefore, $\tau$ is a surjective group homomorphism. By the First Isomorphism Theorem, 
$S_{Q'}/\ker(\tau) \cong \Inn(Q'),$ as desired.
\end{proof}
\begin{theorem}
Suppose $Q''\sq Q'\sq Q$ and that $Q''$ is strongly complemented within $Q$. Then, $Q''$ is strongly complemented in $Q'$.
\end{theorem}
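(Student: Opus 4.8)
The plan is to reduce the statement to two elementary facts: the intersection of two subquandles is a subquandle, and a subquandle of $Q$ that happens to be contained in $Q'$ is automatically a subquandle of $Q'$ (since the restriction of $\thru$ to $Q'$ restricts further). No heavy machinery is needed; in particular neither Theorem \ref{equivalenceTheorem} nor Lemma \ref{Subquotient} is strictly required, though I will note an alternative route through them.

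First I would unpack the hypotheses. By definition, $Q''$ strongly complemented in $Q$ means $Q\setminus Q''\sq Q$. I also have $Q'\sq Q$ by assumption. The key observation is then the set-theoretic identity
\[
Q'\setminus Q'' \;=\; Q'\cap(Q\setminus Q'').
\]
Since both $Q'$ and $Q\setminus Q''$ are subquandles of $Q$, their meet in $\cL(Q)$ — which, as recalled in Section 3, is exactly their set-theoretic intersection — is again a subquandle of $Q$. Write $C:=Q'\setminus Q''$; then $C\sq Q$ and $C\subseteq Q'$, so $C$ is closed under the restriction of $\thru$ to $Q'$, i.e.\ $C\sq Q'$. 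But $C$ is precisely the complement of $Q''$ within $Q'$, so by definition $Q''$ is strongly complemented in $Q'$.

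The only point requiring a moment's care — and the thing that makes the statement look less trivial than it is — is keeping track of \emph{which} ambient quandle the complement is taken in: ``strongly complemented in $Q'$'' refers to $Q'\setminus Q''$, not $Q\setminus Q''$, and the content of the argument is that the former is obtained from the latter by intersecting with $Q'$. Beyond that the proof is pure bookkeeping, so I do not anticipate a genuine obstacle.

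For completeness I would remark on the orbit-theoretic alternative. By Theorem \ref{equivalenceTheorem}, $Q''$ strongly complemented in $Q$ is equivalent to $Q''$ being a union of $\Inn(Q)$-orbits, hence $\Inn(Q)$-stable. Using the surjective restriction homomorphism $\tau\colon S_{Q'}\twoheadrightarrow\Inn(Q')$ of Lemma \ref{Subquotient}, every element of $\Inn(Q')$ is the restriction to $Q'$ of an element of $\Inn(Q)$; since $Q''\subseteq Q'$ is $\Inn(Q)$-stable, it is therefore $\Inn(Q')$-stable as well, i.e.\ a union of $\Inn(Q')$-orbits. Applying Theorem \ref{equivalenceTheorem} to the pair $Q''\sq Q'$ then gives that $Q''$ is strongly complemented in $Q'$. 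Either argument completes the proof; I would present the intersection argument as the main one for its brevity.
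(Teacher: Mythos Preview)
Your proposal is correct. Both routes you outline are valid, and interestingly the paper chooses your \emph{second} one: it proves the result by invoking Theorem~\ref{equivalenceTheorem} and the surjectivity of the restriction map $\tau\colon S_{Q'}\twoheadrightarrow\Inn(Q')$ from Lemma~\ref{Subquotient}, exactly as in your ``orbit-theoretic alternative.'' Your primary argument via the identity $Q'\setminus Q'' = Q'\cap(Q\setminus Q'')$ and closure of subquandles under intersection is genuinely different and strictly more elementary --- it uses nothing beyond the definition of strong complementation and the fact that meets in $\cL(Q)$ are set-theoretic intersections. What the paper's approach buys is a demonstration of the machinery (the subquotient lemma and the fixed-point characterization) in action, which fits its expository aims; what your intersection argument buys is a one-line proof that makes the result transparent. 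Either is a complete proof, and your instinct to lead with the shorter one is sound.
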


\begin{proof}
By Theorem \ref{equivalenceTheorem}, it suffices to show that $Q''$ is a fixed point under the action of $\Inn(Q')$ on $\cL(Q')$. To that end, fix arbitrary $\sigma \in \Inn(Q')$. Consider $S_{Q'}:=\langle S(Q')\rangle \le \Inn(Q)$, where $S(Q')$ denotes the image of $Q'$ under the function which maps elements to their symmetries. Since the restriction map $\tau:S_{Q'}\to \Inn(Q')$ from Lemma \ref{Subquotient} is surjective, there exists some $\sigma' \in S_{Q'}$ extending $\sigma.$ Then, because  $Q''$ is strongly complemented in $Q$, by Theorem \ref{equivalenceTheorem}, it is a fixed point of the action of $\Inn(Q)$ on $\cL(Q).$ In particular, $Q''\sigma' = Q''$. But, since $\sigma'$ extends $\sigma:Q'\to Q'$ and since $Q''\subseteq Q'$, we must also have $Q''\sigma = Q''$. Thus, since $\sigma \in \Inn(Q)$ was arbitrary, $Q''$ is indeed a fixed point under the action of $\Inn(Q')$ on $\cL(Q')$, as desired.
\end{proof}
To establish our transitivity criterion, we will need several technical lemmas.
\begin{lemma}
\label{orbitClosureSubsetLemma}
Suppose $Q''\sq Q'\sq Q$ such that $Q''$ is strongly complemented within $Q'$ and $Q'$ is strongly complemented within $Q$. Then, $Q''\cdot \Inn(Q)\subseteq Q'$.
\end{lemma}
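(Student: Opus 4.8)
The plan is to unpack the hypotheses through Theorem~\ref{equivalenceTheorem} and observe that, for this particular statement, only the hypothesis on $Q'$ does any work. First I would invoke the equivalence (1)~$\Leftrightarrow$~(2) of Theorem~\ref{equivalenceTheorem}: since $Q'$ is strongly complemented in $Q$, it is a union of orbits of the action of $\Inn(Q)$ on $Q$. The consequence I want to extract is that $Q'$ is \emph{invariant} under $\Inn(Q)$, i.e. $q\sigma \in Q'$ whenever $q \in Q'$ and $\sigma \in \Inn(Q)$. This is immediate, since the orbit $q\cdot\Inn(Q)$ of any $q \in Q'$ is one of the orbits whose union is $Q'$, hence is contained in $Q'$.

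Second, I would run the containment directly. Fix any $q \in Q''$. Because $Q'' \sq Q'$, we have $q \in Q'$, so by the previous step $q\cdot\Inn(Q) \subseteq Q'$. Taking the union over all $q \in Q''$ then gives
$$Q''\cdot\Inn(Q) \;=\; \bigcup_{q\in Q''} q\cdot\Inn(Q) \;\subseteq\; Q',$$
which is exactly the claim.

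I do not expect a genuine obstacle here: the lemma is essentially a corollary of the orbit characterization of strong complementation, and the only point requiring care is the logical direction — one needs membership of a point in $Q'$ to pull its entire $\Inn(Q)$-orbit into $Q'$, which is precisely what ``$Q'$ is a union of orbits'' provides. The hypothesis that $Q''$ is strongly complemented within $Q'$ is not actually needed for this statement; it is recorded here because it is required, together with this conclusion, in the proof of the transitivity criterion that follows.
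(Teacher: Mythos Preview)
Your proof is correct and follows essentially the same approach as the paper: both reduce to showing $q\cdot\Inn(Q)\subseteq Q'$ for each $q\in Q''$ via Theorem~\ref{equivalenceTheorem}, the only cosmetic difference being that the paper invokes condition~(3) (that $[Q']\cdot\Inn(Q)=\{Q'\}$) while you invoke the equivalent condition~(2). Your observation that the hypothesis on $Q''$ is unused for this lemma is also correct.
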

\begin{proof}
By the definition of $Q''\cdot \Inn(Q),$ it suffices to show that $q\cdot\Inn(Q)\subseteq Q'$ for any $q\in Q''.$ To that end, fix arbitrary $q \in Q''$ and $\sigma \in \Inn(Q)$. Since $Q'$ is strongly complemented within $Q$, by Theorem \ref{equivalenceTheorem}, $[Q']\cdot \Inn(Q)=\{Q'\},$ so  $Q'f = Q'$ for any $f\in \Inn(Q)$. In particular, since $q \in Q'$ and $\sigma\in \Inn(Q)$, $q\sigma \in Q'$. But since $\sigma\in \Inn(Q)$ was arbitrary, it follows that $q \cdot \Inn(Q)\subseteq Q'$, as desired.
\end{proof}
It turns out that $Q\setminus (Q''\cdot \Inn(Q))$ will complement $Q''$ in $\cL(Q)$, which the above lemma will help show. 
\begin{lemma}
Suppose $Q''\sq Q' \sq Q$ such that $Q''$ is strongly complemented within $Q'$ and $Q'$ is strongly complemented within $Q$. Then, for any $\gamma\in \Inn(Q)$, $Q''\gamma$ is strongly complemented within $Q'$.
\end{lemma}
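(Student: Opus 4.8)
The plan is to verify condition (3) of Theorem~\ref{equivalenceTheorem} for $Q''\gamma$ regarded as a subquandle of $Q'$, i.e.\ to show that $Q''\gamma$ is a fixed point of the action of $\Inn(Q')$ on $\cL(Q')$. First I would record the easy structural facts: since $Q'$ is strongly complemented in $Q$, Theorem~\ref{equivalenceTheorem} gives $\gamma[Q']=Q'$, hence also $\gamma^{-1}[Q']=Q'$, so $\gamma$ restricts to a quandle automorphism of $Q'$ and $Q''\gamma=\gamma[Q'']$ is indeed a subquandle of $Q'$. The one identity I will lean on is the standard fact that conjugating a symmetry by an automorphism is again a symmetry: for $\psi\in\Aut(Q)$, $y\in Q$, $\epsilon\in\{-1,1\}$ one has $\psi\circ S_y^{\epsilon}\circ\psi^{-1}=S_{\psi(y)}^{\epsilon}$, a one-line check from the fact that $\psi$ preserves $\thru$ and $\bthru$.

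Now fix an arbitrary $\sigma\in\Inn(Q')$; the goal is $\sigma[\gamma[Q'']]=\gamma[Q'']$. The key move, and also the only genuine subtlety, is that $\gamma$ need not restrict to an element of $\Inn(Q')$, so one cannot simply conjugate $\sigma$ by $\gamma$ inside $\Inn(Q')$. Instead I would invoke Lemma~\ref{Subquotient}: the restriction map $\tau\colon S_{Q'}\to\Inn(Q')$ from $S_{Q'}:=\langle S_q : q\in Q'\rangle\le\Inn(Q)$ is surjective, so there is $\sigma'\in S_{Q'}$ with $\sigma'|_{Q'}=\sigma$, and we may write $\sigma'=S_{y_n}^{\epsilon_n}\circ\cdots\circ S_{y_1}^{\epsilon_1}$ with every $y_i\in Q'$. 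Conjugating by $\gamma^{-1}$ and applying the symmetry-conjugation identity term by term yields $\gamma^{-1}\circ\sigma'\circ\gamma=S_{\gamma^{-1}(y_n)}^{\epsilon_n}\circ\cdots\circ S_{\gamma^{-1}(y_1)}^{\epsilon_1}$; since each $\gamma^{-1}(y_i)\in\gamma^{-1}[Q']=Q'$, this element again lies in $S_{Q'}$. Hence its restriction $(\gamma^{-1}\circ\sigma'\circ\gamma)|_{Q'}=\tau(\gamma^{-1}\circ\sigma'\circ\gamma)$ lies in $\Inn(Q')$.

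Finally I would cash this in. Because $Q''$ is strongly complemented in $Q'$, Theorem~\ref{equivalenceTheorem} says $Q''$ is fixed by every element of $\Inn(Q')$ acting on $\cL(Q')$; applying this to the element just produced gives $(\gamma^{-1}\circ\sigma'\circ\gamma)|_{Q'}[Q'']=Q''$, and since $Q''\subseteq Q'$ the restriction may be dropped: $(\gamma^{-1}\circ\sigma'\circ\gamma)[Q'']=Q''$. Taking images under $\gamma$ gives $(\sigma'\circ\gamma)[Q'']=\gamma[Q'']$, and since $\gamma[Q'']\subseteq\gamma[Q']=Q'$ while $\sigma'$ and $\sigma$ agree on $Q'$, we may replace $\sigma'$ by $\sigma$ to conclude $\sigma[\gamma[Q'']]=\gamma[Q'']$. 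As $\sigma\in\Inn(Q')$ was arbitrary, $\gamma[Q'']=Q''\gamma$ is a fixed point of $\Inn(Q')$ on $\cL(Q')$, so by Theorem~\ref{equivalenceTheorem} it is strongly complemented in $Q'$. I expect the main obstacle to be exactly the point flagged above — that $\gamma$ lives in $\Inn(Q)$ rather than $\Inn(Q')$ — which is resolved precisely by pairing the subquotient Lemma~\ref{Subquotient} with the ``conjugate of a symmetry is a symmetry'' identity; everything else is routine bookkeeping with restrictions of bijections, of the same flavor as the proof of Lemma~\ref{Subquotient}.
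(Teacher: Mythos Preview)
Your argument is correct, but it takes a considerably longer route than the paper's. You verify condition (3) of Theorem~\ref{equivalenceTheorem} by lifting an arbitrary $\sigma\in\Inn(Q')$ to $\sigma'\in S_{Q'}\le\Inn(Q)$ via Lemma~\ref{Subquotient}, conjugating by $\gamma^{-1}$ using the identity $\psi S_y^{\epsilon}\psi^{-1}=S_{\psi(y)}^{\epsilon}$ to land back in $S_{Q'}$, and then invoking the fixed-point property of $Q''$ under $\Inn(Q')$. The paper instead verifies condition (1) directly: since $Q''$ is strongly complemented in $Q'$, the set $Q'\setminus Q''$ is a subquandle of $Q'$; since $\gamma$ is a quandle automorphism of $Q$ with $\gamma[Q']=Q'$, the image $(Q'\setminus Q'')\gamma$ is again a subquandle of $Q'$, and because $\gamma$ is a bijection fixing $Q'$ setwise this image equals $Q'\gamma\setminus Q''\gamma = Q'\setminus Q''\gamma$. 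Hence $Q''\gamma$ is strongly complemented in $Q'$ by definition.

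The paper's argument is a two-line set-theoretic observation requiring neither Lemma~\ref{Subquotient} nor the conjugation identity. Your approach does show something a bit more: the conjugation step implicitly establishes that $\gamma|_{Q'}$ normalizes the image of $S_{Q'}$ in $\Aut(Q')$, which is a pleasant structural fact. But for the statement at hand the paper's direct use of condition (1) is decisively simpler --- the moral is that ``image of a set-complement under a bijection is the set-complement of the image'' already does all the work.
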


\begin{proof}
 Since $Q''$ is strongly complemented in $Q'$ and $Q'$ is strongly complemented in $Q$, by Theorem \ref{equivalenceTheorem}, we have $Q'\setminus Q''\sq Q'$ and $[Q']\cdot \Inn(Q) =\{Q'\}$. Fix arbitrary $\gamma \in \Inn(Q)$. Since the images of subquandles of $Q$ under inner automorphisms of $Q$ are still subquandles of $Q$, $(Q'\setminus Q'')\gamma\sq Q$. Also, because $Q'\setminus Q''\subset Q'$ and $\gamma:Q'\to Q',$ $(Q'\setminus Q'') \gamma\subset Q'$. But, since we showed $ (Q'\setminus Q'')\gamma$ is a quandle, $ (Q'\setminus Q'')\gamma\sq Q'$. We also know $Q' \gamma = Q'$ because $[Q']\cdot \Inn(Q) =\{Q'\}$. Also, since $\gamma$ is a bijection, $(Q'\setminus Q'') \gamma = Q' \gamma\setminus Q''\gamma$. Combining the previous two statements, we have $(Q'\setminus Q'')\gamma = Q'\setminus Q''\gamma \sq Q'$. Thus, since its set-theoretic complement is a subquandle of $Q'$, $Q''\gamma$ is strongly complemented within $Q'$.
\end{proof}

\begin{lemma}
\label{removalLemma}
Suppose $Q''\sq Q' \sq Q$ such that $Q''$ is strongly complemented within $Q'$ and $Q'$ is strongly complemented within $Q$. Let $\sigma\in \Inn(Q)$. Then there exists some $\sigma'\in \gen{\{S_x : x \in Q\setminus Q'\}}\leq \Inn(Q)$ such that $Q''\sigma = Q''\sigma'$.
\end{lemma}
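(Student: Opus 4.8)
The plan is to fix a decomposition $\sigma = S_{x_n}^{\epsilon_n} \circ \cdots \circ S_{x_1}^{\epsilon_1}$ of $\sigma$ into symmetries of $Q$ and their inverses (one exists by the definition of $\Inn(Q)$) and to induct on the number $m$ of indices $i$ with $x_i \in Q'$; call these the \emph{interior} factors of the decomposition. If $m = 0$, the decomposition already exhibits $\sigma$ as an element of $\gen{\{S_x : x \in Q \setminus Q'\}}$, so one may take $\sigma' := \sigma$. The inductive step will be to show that whenever the decomposition has at least one interior factor, there is $\tilde{\sigma} \in \Inn(Q)$ with $Q''\tilde{\sigma} = Q''\sigma$ admitting a decomposition with one fewer interior factor.

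The observation I would isolate first is: for \emph{every} $\gamma \in \Inn(Q)$, every interior symmetry fixes $Q''\gamma$ setwise. Indeed, by the lemma immediately preceding, $Q''\gamma$ is strongly complemented within $Q'$; in particular $Q''\gamma \subseteq Q'$, and by Theorem~\ref{equivalenceTheorem} it is a fixed point of the action of $\Inn(Q')$ on $\cL(Q')$. Given $y \in Q'$ and $\epsilon \in \{-1,1\}$, strong complementedness of $Q'$ in $Q$ together with Theorem~\ref{equivalenceTheorem} gives $S_y^{\epsilon}[Q'] = Q'$, so $S_y^{\epsilon}$ restricts to a bijection of $Q'$ which is precisely the symmetry (or inverse symmetry) at $y$ inside $Q'$, hence an element of $\Inn(Q')$. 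Since $Q''\gamma \subseteq Q'$, applying $S_y^{\epsilon}$ to $Q''\gamma$ agrees with applying $S_y^{\epsilon}|_{Q'}$, and the fixed-point property yields $S_y^{\epsilon}[Q''\gamma] = Q''\gamma$.

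Granting this, the inductive step is bookkeeping. I would let $i$ be the smallest interior index and write $\sigma = \beta \circ S_{x_i}^{\epsilon_i} \circ \alpha$ with $\alpha = S_{x_{i-1}}^{\epsilon_{i-1}} \circ \cdots \circ S_{x_1}^{\epsilon_1}$ and $\beta = S_{x_n}^{\epsilon_n} \circ \cdots \circ S_{x_{i+1}}^{\epsilon_{i+1}}$ (each possibly an empty product). Since $\alpha \in \Inn(Q)$, the observation with $\gamma = \alpha$ gives $S_{x_i}^{\epsilon_i}[\alpha[Q'']] = \alpha[Q'']$, so $Q''\sigma = \beta[S_{x_i}^{\epsilon_i}[\alpha[Q'']]] = \beta[\alpha[Q'']] = Q''(\beta\circ\alpha)$. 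The element $\tilde{\sigma} := \beta\circ\alpha$ has the decomposition $S_{x_n}^{\epsilon_n} \circ \cdots \circ S_{x_{i+1}}^{\epsilon_{i+1}} \circ S_{x_{i-1}}^{\epsilon_{i-1}} \circ \cdots \circ S_{x_1}^{\epsilon_1}$, which has exactly one fewer interior factor (the indices $x_1,\dots,x_{i-1}$ are all exterior by minimality of $i$). Iterating until the interior count reaches $0$ produces $\sigma' \in \gen{\{S_x : x \in Q \setminus Q'\}}$ with $Q''\sigma' = Q''\sigma$.

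I do not anticipate a serious obstacle: the substance has already been packaged into the lemma immediately preceding (that $Q''\gamma$ remains strongly complemented in $Q'$ for every $\gamma \in \Inn(Q)$) together with Theorem~\ref{equivalenceTheorem}. The one point needing care is the passage between $S_y^{\epsilon}$ as a self-map of $Q$ and $S_y^{\epsilon}|_{Q'}$ as an honest element of $\Inn(Q')$ — which is precisely where strong complementedness of $Q'$ in $Q$ enters, since it makes $Q'$ invariant under all of $\Inn(Q)$, so that the restriction is defined and is genuinely inner. If a fully formal treatment of the composites is desired, the two elementary facts about restrictions recorded in the proof of Lemma~\ref{Subquotient} can be invoked.
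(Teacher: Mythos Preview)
Your proposal is correct and follows essentially the same route as the paper: fix a word in the symmetries, use the preceding lemma together with Theorem~\ref{equivalenceTheorem} to see that any factor $S_{x_i}^{\epsilon_i}$ with $x_i\in Q'$ fixes the set $Q''\gamma$ (where $\gamma$ is the partial product up to that point), and hence may be deleted without changing the action on $Q''$. The only cosmetic differences are that you phrase the deletion as an explicit induction on the number of interior factors, whereas the paper phrases it as a minimal-counterexample contradiction, and that you keep track of the exponents $\epsilon_i$ throughout while the paper's write-up suppresses them.
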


\begin{proof}
Since $\sigma\in \Inn(Q)$, we may fix a representation $\sigma = S_{x_1}S_{x_2}\cdots S_{x_n}$ of $\sigma$ as a string of symmetries, where each $x_i\in Q$ and $\epsilon_i \in \{-1,1\}$. Our strategy will be to show that by removing all instances of symmetries at elements in $Q'$ which appear in $S_{x_1}S_{x_2}\cdots  S_{x_n}$, we obtain another element $\sigma' \in \Inn(Q)$ which acts the same as $\sigma$ upon $Q''$ in $\cL(Q)$.
\\\\
 A systematic way to construct $\sigma'$ from the string $S_{x_1}S_{x_2}\cdots S_{x_n}$ is by checking if $x_1$ is in $Q'$ and removing $S_{x_1}$ iff it is, then similarly checking $x_2$ and possibly removing $S_{x_2}$, and so on. Assume for the sake of contradiction that in doing this process, there is an index $k$ such that $x_k\in Q'$ and that when $S_{x_k}$ is removed from the string $S_{x_1}S_{x_2}\cdots S_{x_n}$, the resulting inner automorphism acts differently than $\sigma$ does upon $Q''$. Let $i$ be the first such index $k$, so for all $1\leq j<i$ such that $x_j\in Q'$, removing $S_{x_j}$ doesn't change the resulting action on $Q''$, but removing $S_{x_i}$ does.
\\\\
Then, let $\gamma = S_{x_1}\cdots S_{x_{i-1}}$ and $\gamma' = S_{x_{i +1}}\cdots S_{x_n}$, so we have 
$\sigma = \gamma S_{x_i}\gamma'$. By the associativity of group action, we have
$ Q''\sigma = ((Q''\gamma)S_{x_i})\gamma'$. Moreover, by the previous lemma, $Q''\gamma$ is strongly complemented within $Q'$. Hence, $Q''\gamma$ is a fixed point of the action of $\Inn(Q')$ on $\mathcal{L}(Q')$, as follows from Theorem \ref{equivalenceTheorem}. Additionally, since $x_i\in Q'$, we know that $S_{x_i}|_{Q'}$ is an element of $\Inn(Q')$, and moreover, we have $xS_{x_i}|_{Q'}=xS_{x_i}$ for any $x \in Q'$. From this and the previous statement, it follows that $(Q''\gamma)S_{x_i} = (Q''\gamma)S_{x_i}|_{Q'} = Q''\gamma$. Now, substituting, we have the identity \[
Q''\sigma = ((Q''\gamma)S_{x_i})\gamma' = (Q''\gamma)\gamma' = Q''(\gamma\gamma').
\]
Hence, removing $S_{x_i}$ doesn't change the action of the resulting string (namely $\gamma\gamma'$) on $Q''$, contradicting our assumption that it did. So, no such index $i$ can exist, and hence removing all symmetries at elements in $Q'$ from $S_{x_1}S_{x_2}\cdots  S_{x_n}$ also doesn't resulting string's, namely $\sigma'$'s, action on $Q''$. 
\end{proof}

\begin{theorem}
Let $Q''\sq Q' \sq Q$ such that $Q''$ is strongly complemented within $Q'$, while $Q'$ is strongly complemented within $Q$. Then $Q''$ is complemented within $Q$. 
\end{theorem}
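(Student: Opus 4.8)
The plan is to produce an explicit complement. Given that $Q''$ is strongly complemented in $Q'$ and $Q'$ is strongly complemented in $Q$, I would guess — as the remark preceding Lemma \ref{removalLemma} suggests — that the set-theoretic complement of the orbit closure of $Q''$ works: set
\[
C := Q \setminus \bigl(Q''\cdot \Inn(Q)\bigr).
\]
I then need to verify the three defining conditions of a complement in $\cL(Q)$: that $C\sq Q$, that $Q''\wedge C=\emptyset$, and that $Q''\vee C=Q$.

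The first two conditions are essentially free. The orbit closure $Q''\cdot\Inn(Q)$ is by definition a union of orbits of the action of $\Inn(Q)$ on $Q$, so by Theorem \ref{equivalenceTheorem} (the implication (2)$\Rightarrow$(1)) it is strongly complemented, i.e.\ $C=Q\setminus(Q''\cdot\Inn(Q))$ is a subquandle of $Q$. For the meet, every $q\in Q''$ lies in $q\cdot\Inn(Q)\subseteq Q''\cdot\Inn(Q)$ since $q=q\cdot\mathrm{id}$, so $Q''\subseteq Q''\cdot\Inn(Q)$ and hence $Q''\cap C=\emptyset$, which is exactly $Q''\wedge C=\emptyset$.

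The substantive step is the join $Q''\vee C=\qgen{Q''\cup C}=Q$. Since $Q=(Q''\cdot\Inn(Q))\cup C$ and $C\subseteq\qgen{Q''\cup C}$ trivially, it suffices to show $Q''\cdot\Inn(Q)\subseteq\qgen{Q''\cup C}$. Here is where the preceding lemmas do the work. Fix $p\in Q''\cdot\Inn(Q)$, say $p=q\sigma$ with $q\in Q''$ and $\sigma\in\Inn(Q)$; in particular $p\in Q''\sigma$ (the image of the \emph{subquandle} $Q''$). By Lemma \ref{removalLemma} there is $\sigma'\in\gen{\{S_x:x\in Q\setminus Q'\}}$ with $Q''\sigma=Q''\sigma'$, so $p=q'\sigma'$ for some $q'\in Q''$. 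Now Lemma \ref{orbitClosureSubsetLemma} gives $Q''\cdot\Inn(Q)\subseteq Q'$, hence $Q\setminus Q'\subseteq Q\setminus(Q''\cdot\Inn(Q))=C$; thus $\sigma'$ is a composite of symmetries $S_x^{\pm1}$ with $x\in C$. Since $\qgen{Q''\cup C}$ is a subquandle containing $q'$ together with all those $x$, it is closed under the maps $S_x$ and $S_x^{-1}$ (i.e.\ under $\thru x$ and $\bthru x$), so $p=q'\sigma'\in\qgen{Q''\cup C}$. This yields $Q''\cdot\Inn(Q)\subseteq\qgen{Q''\cup C}$ and therefore $\qgen{Q''\cup C}=Q$, completing the proof that $C$ complements $Q''$.

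I do not expect a genuine obstacle: the two technical lemmas already established (Lemma \ref{orbitClosureSubsetLemma} and especially Lemma \ref{removalLemma}) were precisely engineered for this, and the heart of the argument is the single observation that any inner automorphism carrying a point of $Q''$ to the rest of its orbit can, modulo its effect on $Q''$, be rewritten using only symmetries at elements lying outside $Q'$ — and those elements all live in $C$. The remaining care is purely bookkeeping: keeping straight the two actions of $\Inn(Q)$ (on points versus on $\cL(Q)$) and verifying the inclusion $Q\setminus Q'\subseteq C$, for which Lemma \ref{orbitClosureSubsetLemma} is exactly what is needed.
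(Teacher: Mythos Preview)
Your proof is correct and follows essentially the same approach as the paper: you take $C=Q\setminus(Q''\cdot\Inn(Q))$ as the complement, dispose of the subquandle and meet conditions via Theorem \ref{equivalenceTheorem}, and establish the join by invoking Lemma \ref{removalLemma} to rewrite the relevant inner automorphism using only symmetries at elements of $Q\setminus Q'$, then Lemma \ref{orbitClosureSubsetLemma} to place those elements inside $C$. The only cosmetic difference is that the paper first lands in $\qgen{Q''\cup(Q\setminus Q')}$ and then passes to $\qgen{Q''\cup C}$ via the inclusion $Q\setminus Q'\subseteq C$, whereas you absorb that step directly.
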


\begin{proof}
Since $Q''\cdot \Inn(Q)$ is a union of orbits by construction, by Theorem \ref{equivalenceTheorem}, it follows that $Q\setminus (Q''\cdot \Inn(Q))\sq Q$. And since $Q''\subseteq Q''\cdot\Inn(Q)$, $Q''\cap Q\setminus (Q''\cdot \Inn(Q))=\emptyset$. Thus, to show $Q\setminus (Q''\cdot \Inn(Q))$ is a complement for $Q''$ in $\cL(Q)$, it remains to show that $Q''\vee Q\setminus(Q''\cdot \Inn(Q)) = Q$. To do this, it suffices to show that $Q''\cdot \Inn(Q)\subseteq Q''\vee Q\setminus(Q''\cdot \Inn(Q))$ --- if this holds, we will have $$\underbrace{(Q\setminus (Q''\cdot\Inn(Q)))\cup (Q''\cdot \Inn(Q))}_Q\subseteq Q''\vee Q\setminus(Q''\cdot \Inn(Q)),$$ 
which implies $Q= Q''\vee Q\setminus(Q''\cdot \Inn(Q)).$
\\\\
To that end, choose any $x\in Q''\cdot \Inn(Q)$. Then, there is some $\sigma \in \Inn(Q)$ such that $x\in Q''\sigma$. By Lemma \ref{removalLemma}, there exists some $\sigma'\in \gen{\{S_x : x \in Q\setminus Q'\}}$ such that $Q''\sigma' = Q''\sigma$. Fix $q_1,\dots, q_n\in Q\setminus Q'$ and $\epsilon_1,\dots, \epsilon_n \in \{-1, 1\}$ such that $\sigma' = S_{q_1}^{\epsilon_1}\dots S_{q_n}^{\epsilon_n}$. By the definition of the action of $\Inn(Q)$, $Q''\sigma' = \{q\thru^{\epsilon_1}q_1\dots \thru^{\epsilon_n}q_n: q\in Q''\}$. Thus, $x = q \thru^{\epsilon_1}q_1\dots \thru^{\epsilon_n}q_n$ for some $q \in Q''$. In particular, $x \in \qgen{Q''\cup (Q\setminus Q')}$. Additionally, we know $Q''\cdot\Inn(Q)\subseteq Q'$ by Lemma \ref{orbitClosureSubsetLemma}, so $Q\setminus Q'\subseteq Q\setminus (Q''\cdot \Inn(Q))$. Therefore,
\begin{align*}
    x \in \qgen{Q''\cup (Q\setminus Q')} 
    &\subseteq  \qgen{Q''\cup (Q\setminus (Q''\cdot\Inn(Q)))} = Q''\vee Q\setminus (Q''\cdot\Inn(Q))
\end{align*}
by the definition of join. But, since $x\in Q''\cdot\Inn(Q)$ was arbitrary, it follows that $Q''\cdot \Inn(Q) \subseteq Q''\vee Q\setminus (Q''\cdot \Inn(Q))$, as desired.
\end{proof}

This theorem not only shows that $Q''$ is complemented when it lies two links down in a finite chain of strongly complemented subquandles of $Q$ --- it also provides an explicit complement for $Q''$, namely $Q\setminus (Q\cdot \Inn(Q))$. Moreover, since this complement of $Q''$ is a union of orbits, it is strongly complemented within $Q$, namely by $Q\cdot\Inn(Q)$.


An natural question to consider is  whether complementedness is carried down further links in a finite chain of strongly complemented subquandles. More explicitly, suppose we have a chain of strongly complemented subquandles, $Q_3\sq Q_2\sq Q_1\sq Q$. A natural attempt would be to consider $Q\setminus (Q_3\cdot \Inn(Q))$ as a possible complement of $Q_3$ in $\cL(Q)$. However, the join requirement for this to be a complement is not nearly as straightforward to verify. It is our hope that future research will prove that complementedness is transitive across chains of strongly complemented of arbitrary finite length, or else find a counterexample.

\subsection{Infinite Quandles}

The question of whether the subquandle lattice of a finite quandle is complemented has been settled by previous research. More explicitly, since quandles are racks, the following theorem shows that the subquandle lattices of finite quandles are necessarily complemented. 
\begin{theorem}\cite{SakiKiani2021}
\label{finiteComplemented}
Let $R$ be a finite rack. Then the following hold:
\begin{enumerate}
    \item The intersection of all the maximal subracks of $R$ is empty.
    \item Suppose that $R_1, R_2\sq R$. Then $R_1$ is complemented in $\mathcal{R}(R_1\vee R_2)$ by some subset of $R_2$.
\end{enumerate}
\end{theorem}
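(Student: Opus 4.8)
I would prove (1) directly by a symmetry argument and then derive (2) from (1) by a short induction. The one background fact I will lean on is that any union of $\Inn(R)$-orbits is a subrack, and hence so is the complement of any such union — this is recorded in the paper for quandles and holds verbatim for racks.

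For (1): write $\Phi(R)$ for the intersection of all maximal (proper) subracks of $R$ (if $R$ has no proper subrack the statement is vacuous, and with $\emptyset$ counted as a subrack this does not occur for nonempty finite $R$). The plan is to note that every $\sigma\in\Inn(R)$ lies in $\Aut(R)$, so $\sigma$ carries subracks to subracks bijectively, preserves inclusion, and fixes $R$ itself; hence it sends maximal subracks to maximal subracks. Thus $\Inn(R)$ permutes the family of maximal subracks, so it fixes their intersection $\Phi(R)$ setwise. A subset fixed setwise by $\Inn(R)$ is a union of orbits, so $R\setminus\Phi(R)$ is a union of orbits and therefore a subrack. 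If $\Phi(R)\neq\emptyset$, then $R\setminus\Phi(R)$ is a proper subrack, hence contained in some maximal subrack $M$; but $\Phi(R)\subseteq M$ by definition of $\Phi(R)$, whence $M=(R\setminus\Phi(R))\cup\Phi(R)=R$, contradicting that $M$ is proper. So $\Phi(R)=\emptyset$.

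For (2): After replacing the ambient rack by $R_1\vee R_2$ (note $\mathcal R(R_1\vee R_2)$ is just the subrack lattice of the rack $R_1\vee R_2$, and the subracks of $R_1\vee R_2$ contained in $R_2$ are the same objects whether viewed inside $R$ or inside $R_1\vee R_2$), it suffices to produce a subrack $C\subseteq R_2$ with $C\cap R_1=\emptyset$ and $R_1\vee C=R_1\vee R_2$. I would induct on $|R_1\cap R_2|$. If $R_1\cap R_2=\emptyset$, take $C=R_2$. Otherwise pick $a\in R_1\cap R_2$ and apply (1) to the finite rack $R_2$ to obtain a maximal subrack $N\psq R_2$ with $a\notin N$. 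Since $N$ is maximal in $R_2$ and $a\in R_2\setminus N$, we have $\qgen{N\cup\{a\}}=R_2$; as $a\in R_1$ this forces $\qgen{R_1\cup N}\supseteq R_1\cup R_2$, so $R_1\vee N=R_1\vee R_2$. Also $R_1\cap N\subseteq R_1\cap R_2$ while $a\in(R_1\cap R_2)\setminus N$, so $|R_1\cap N|<|R_1\cap R_2|$. Applying the inductive hypothesis to the pair $(R_1,N)$ then yields a subrack $C\subseteq N\subseteq R_2$ with $C\cap R_1=\emptyset$ and $R_1\vee C=R_1\vee N=R_1\vee R_2$, which is the required complement of $R_1$ in $\mathcal R(R_1\vee R_2)$.

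The crux is the observation in (1) that $\Inn(R)$ permutes the maximal subracks, so that $\Phi(R)$ is a union of orbits and its complement a subrack; once this is in hand, the rest of (1) is immediate. Part (2) is then a routine induction whose only delicate point is the verification that $R_1\vee N=R_1\vee R_2$, which is exactly where maximality of $N$ inside $R_2$ is used.
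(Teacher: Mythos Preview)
The paper does not give its own proof of this theorem; it is quoted verbatim from \cite{SakiKiani2021} and used only as a black box (to derive the corollary that finite subrack lattices are complemented, and as background for the discussion of infinite quandles). So there is no in-paper argument to compare against.

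That said, your proof is correct and is essentially the standard argument. Both steps are sound: in (1), the key observation that $\Inn(R)$ permutes the maximal subracks and hence fixes $\Phi(R)$ setwise is exactly right, and the contradiction via ``$R\setminus\Phi(R)$ is a proper subrack contained in some maximal $M$, yet $\Phi(R)\subseteq M$ too'' is clean. Your appeal to the fact that unions of $\Inn(R)$-orbits and their complements are subracks uses only axioms Q2 and Q3, so it does indeed hold for racks and not just quandles. In (2), the induction on $|R_1\cap R_2|$ is well-founded by finiteness, the passage from $\qgen{N\cup\{a\}}=R_2$ (maximality of $N$ in $R_2$) to $R_1\vee N=R_1\vee R_2$ is justified because subrack generation is absolute in the sense that $\qgen{S}$ computed inside $R_2$ agrees with $\qgen{S}$ computed inside the ambient rack whenever $S\subseteq R_2$, and the inductive call is legitimate since $N$ is again a subrack of the ambient rack. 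Nothing is missing.
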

\begin{corollary} \cite{SakiKiani2021}
If $R$ is a finite rack, then $\mathcal{R}(R)$ is complemented. 
\end{corollary}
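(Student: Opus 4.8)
The plan is to read off the corollary directly from Theorem~\ref{finiteComplemented}, which supplies all the real content. Let $R_1\sq R$ be an arbitrary subrack; I must produce $R_2\sq R$ with $R_1\vee R_2=R$ and $R_1\wedge R_2=\emptyset$. The move I would make is to apply part (2) of the theorem with the second subrack taken to be $R$ itself — legitimate since $R\sq R$ trivially. Because $R_1\subseteq R$ we have $R_1\vee R=\qgen{R_1\cup R}=R$, hence $\mathcal{R}(R_1\vee R)=\mathcal{R}(R)$. Part (2) then hands us a subset $C\subseteq R$ complementing $R_1$ inside $\mathcal{R}(R)$; as an element of that lattice $C$ is automatically a subrack, and by the definition of ``complemented in a lattice'' we get $R_1\vee C=R$ and $R_1\wedge C=\emptyset$. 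Since $R_1$ was arbitrary, $\mathcal{R}(R)$ is complemented.

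The only points needing a line of care are the degenerate cases, and even these are subsumed by the uniform argument: for $R_1=R$ the construction returns (a subrack contained in) $\emptyset$, and for $R_1=\emptyset$ it returns $R$. One could instead route the proof through part (1) — the intersection of the maximal subracks of $R$ being empty lets one complement a maximal subrack above $R_1$ by a singleton and then induct downward on $|R\setminus R_1|$, merging complements via part (2) — but this is strictly more work and part (1) is not logically required here; it is best thought of as an ingredient feeding the proof of part (2) in \cite{SakiKiani2021}.

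I do not anticipate a genuine obstacle: once Theorem~\ref{finiteComplemented} is granted, the corollary is essentially bookkeeping, and the substance (finiteness, the structure of subrack lattices) has already been discharged in the cited work. The only thing to be careful about when writing it up is to state explicitly that $R$ itself is an admissible choice of ``$R_2$'' in part (2), so that the instance $\mathcal{R}(R_1\vee R_2)=\mathcal{R}(R)$ is exactly the assertion being proved.
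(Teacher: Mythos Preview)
Your derivation is correct: setting $R_2=R$ in part (2) of Theorem~\ref{finiteComplemented} yields $R_1\vee R_2=R$, so the promised complement of $R_1$ lies in $\mathcal{R}(R)$ itself, which is exactly the claim. The paper does not supply its own proof of this corollary---both the theorem and the corollary are simply quoted from \cite{SakiKiani2021}---so there is nothing to compare against beyond noting that your one-line specialization is the intended (and standard) way to read the corollary off from part (2).
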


The natural follow-up to this result is whether infinite racks, and hence infinite quandles, have necessarily complemented subobject lattices. Some infinite racks, such as trivial racks, possess complemented subobject lattices. However, the following theorem demonstrates that this is not always the case:

\begin{theorem}\cite{SakiKiani2021} \label{QRacknotComp}
The rack $(\Q,\dthru)$ does not have a complemented subrack lattice. In particular, the singleton $\{0\}\sq \Q$ lacks a complement.
\end{theorem}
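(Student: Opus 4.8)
The plan is to prove the sharper assertion that the singleton subquandle $\{0\}\sq(\Q,\dthru)$ has no complement in $\cL\big((\Q,\dthru)\big)$; since $\cL(Q)$ is complemented only when \emph{every} subquandle has a complement, this is enough. So suppose for contradiction that $S\sq(\Q,\dthru)$ is a complement of $\{0\}$: then $\{0\}\wedge S=\{0\}\cap S=\emptyset$ (equivalently $0\notin S$) and $\{0\}\vee S=\qgen{S\cup\{0\}}=\Q$.

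The first move is to trade the subquandle $\qgen{S\cup\{0\}}$ for something with more structure: the additive subgroup $H\le(\Q,+)$ generated by $S$. Every subgroup of $(\Q,+)$ is a subquandle of $T(\Q)$, being closed under $(x,y)\mapsto 2y-x=y+(y-x)$; since $H$ contains both $S$ and $0$, it contains the subquandle they generate, so $\Q=\qgen{S\cup\{0\}}\subseteq H$, whence $H=\Q$. In particular $S\neq\emptyset$, and I fix some $s_0\in S$.

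Now I would derive a contradiction from the two facts that $S$ is a subquandle of $T(\Q)$ avoiding $0$ and that $S$ generates $\Q$ as a group. Let $\Delta\le(\Q,+)$ be the additive subgroup generated by the differences $\{s-s':s,s'\in S\}$. Because each $s\in S$ equals $s_0+(s-s_0)$, we get $H=\gen{s_0}+\Delta$, so $\Q/\Delta$ is cyclic (generated by the class of $s_0$). But $\Q/\Delta$ is a quotient of the divisible group $(\Q,+)$, hence divisible, and the only divisible cyclic group is the trivial one (neither $\Z$ nor any $\Z/m$ with $m>1$ is divisible); so $\Delta=\Q$, and therefore $2\Delta=2\Q=\Q$. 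On the other hand, for each $a\in S$ the symmetry $S_a$ of $(\Q,\dthru)$ restricts to an automorphism of the quandle $(S,\dthru)$, in particular to a bijection of $S$ onto itself; composing two such, for $a,b\in S$, gives the self-bijection $x\mapsto 2b-(2a-x)=x+2(b-a)$ of $S$, so $S+2(b-a)=S$ for all $a,b\in S$. Hence $S$ is invariant under translation by the whole subgroup generated by the elements $2(b-a)$, namely $2\Delta=\Q$. A nonempty subset of $\Q$ invariant under translation by every element of $\Q$ is all of $\Q$, so $S=\Q\ni 0$, contradicting $0\notin S$. Thus $\{0\}$ has no complement and $\cL\big((\Q,\dthru)\big)$ is not complemented.

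The part I expect to require the most care is the reduction carried out in the second and third paragraphs: first noticing that $\qgen{S\cup\{0\}}$ can be all of $\Q$ only if $S$ already generates $\Q$ as an additive group, and then showing a subquandle of $T(\Q)$ missing the point $0$ cannot do this. The second half is exactly where divisibility of $\Q$ is used, and it must be: in $T(\Z)$ the odd integers form a subquandle avoiding $0$ that generates $\Z$ and in fact complements $\{0\}$, so any argument not exploiting divisibility would be faulty.
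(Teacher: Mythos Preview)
Your proof is correct. The paper does not give its own proof of this theorem but cites Saki--Kiani; from the paper's later references to their Lemma~5.1 and Theorem~5.2, the original argument first classifies the nonempty subquandles of $T(\Q)$ as precisely the cosets $a+G$ of additive subgroups $G\le(\Q,+)$, so a putative complement $S$ of $\{0\}$ is such a coset with $a\notin G$, and one checks that $\qgen{(a+G)\cup\{0\}}$ is the subgroup $\langle a\rangle+G$, which by divisibility of $\Q/G$ can equal $\Q$ only if $G=\Q$, contradicting $a\notin G$. Your route shares the decisive step---$\Q/\Delta$ is simultaneously cyclic and divisible, hence trivial---but sidesteps the full coset classification: rather than proving that $S$ \emph{is} a single coset of its difference group $\Delta$, you only use that $S$ is invariant under translation by $2\Delta$ (via the composite symmetry $S_bS_a:x\mapsto x+2(b-a)$), and then $2\Delta=2\Q=\Q$ forces $S=\Q$. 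This is slightly more economical, and your closing remark correctly isolates the equality $2\Q=\Q$ as the feature distinguishing $\Q$ from $\Z$, where the odd integers do complement $\{0\}$.
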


To understand the proof of Theorem \ref{QRacknotComp}, we first need the following definition.
\begin{definition}
 We say $M\psq Q$ is \textit{maximal} if there is no $N \psq Q$ such that $M \psq N$. Equivalently, $Q$ is the only proper extension of $M$ in $\cL(Q).$ 
\end{definition}
Observe that since singletons are always subquandles, any maximal subquandle $M$ of a quandle $Q$ must be complemented by $\{x\}$ for any $x\in Q\setminus M$. However, this is not true for racks, as racks may not necessarily have singleton subracks. 
\\\\
The proof of Theorem \ref{QRacknotComp} relies the containment of all subracks of a finite rack $R$ in some maximal subrack of $R$ \cite{SakiKiani2021}. In particular, the ``top layer" of the subrack lattice of $R$ is composed of a set of pairwise-disjoint maximal subracks under which every other proper subrack of $R$ lies. However, this strategy is no longer useful for infinite quandles, as the following result shows that maximal subquandles may not even exist.
\begin{proposition}
\label{QNoMaximal}
$(\Q, \dthru)$ has no maximal subquandles.
\end{proposition}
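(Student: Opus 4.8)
The plan is to show that every proper subquandle of $(\Q,\dthru)$ is contained in a strictly larger proper subquandle, which directly contradicts maximality. Let $M \psq \Q$ be an arbitrary proper subquandle, and fix some $p \in \Q \setminus M$. The goal is to produce a proper subquandle $N$ with $M \psq N \psq \Q$. The natural candidate is to enlarge $M$ by a single well-chosen rational, i.e. take $N = \qgen{M \cup \{p\}}$, and argue that this subquandle is still proper — equivalently, that adjoining $p$ does not generate all of $\Q$.

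First I would understand the subquandle generated by a set under $\dthru$. Since $x \dthru y = 2y - x$, repeatedly applying $\dthru$ to elements $a,b$ produces the coset $a + \Z\cdot 2(b-a)$ (an arithmetic progression), and more generally $\qgen{S}$ for $S \subseteq \Q$ is the smallest subset containing $S$ closed under $(x,y)\mapsto 2y-x$; one checks that this is an affine sub-structure, in fact a coset of a subgroup of $(\Q,+)$ of the form (translate of) $\Z[1/2]$-module-like object — more precisely, $\qgen{S}$ sits inside the affine span $s_0 + G_S$ where $G_S$ is the subgroup of $\Q$ generated by all differences $s - s'$ with $s,s' \in S$, together with its "doublings and halvings forced by the operation." The key structural fact I would isolate as a sublemma: a Takasaki subquandle of $(\Q,\dthru)$, if it is not a single point, is a union of cosets of a nonzero subgroup, and in any case is contained in a proper coset-union as soon as it misses a point. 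Concretely, I would show $\qgen{M \cup\{p\}}$ is contained in $\{x : x \equiv p \text{ or } x \in M + \Z\cdot(\text{suitable } d)\}$, which is still proper — e.g. choose $p$ so that $p$ avoids infinitely many residues.

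The cleanest route is probably via a "weight" or parity-type invariant. Take any prime $q$ (say $q=3$) and observe that $\Z_{(q)} = \{a/b \in \Q : q \nmid b\}$, or more usefully the additive quotient $\Q/\Z_{(q)}$, interacts well with $\dthru$: the dihedral operation descends to quotient abelian groups, so for any subgroup $H \le \Q$ the set $x_0 + H$ (and any union of $H$-cosets) is a subquandle. Given $M \psq \Q$, I would pick a subgroup $H$ with $\Q/H$ infinite such that $M$ is contained in a union of $H$-cosets missing at least one coset; then that union of cosets is the desired proper $N \supsetneq M$. The existence of such $H$ is where the argument has real content: one must rule out that $M$ already meets every coset of every "cofinite-index-like" subgroup. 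I would handle this by first reducing to the case $M$ infinite (finite $M$ is trivially non-maximal since $\Q$ has non-finitely-generated proper subquandles), then noting $M$ generates some subgroup-coset structure whose "denominators" or whose additive closure is a proper subgroup $G \lneq \Q$ (if the additive span of differences of $M$ were all of $\Q$, I'd need a separate argument — but $\Q$ is not finitely generated, and I can instead localize away a single prime not appearing, or appearing only boundedly, in $M$); the union of $G$-cosets meeting $M$ is then a proper subquandle properly containing $M$ unless $M$ is already such a union and is a maximal such — but maximal unions of cosets of a fixed $G \lneq \Q$ are never maximal subquandles because $\Q/G$ has no maximal proper subgroups (as $\Q$, hence every nonzero quotient, is divisible and non-finitely-generated, so has no maximal subgroups).

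The main obstacle I anticipate is the "separate argument" alluded to above: showing that no proper subquandle $M$ can have the property that the additive subgroup generated by $\{m - m' : m,m'\in M\}$ is all of $\Q$ while still being proper in a way that resists enlargement — in other words, pinning down precisely the lattice of Takasaki subquandles of $\Q$ and verifying it has no maximal elements. I expect the correct and slick formulation is: subquandles of $T(\Q)$ are exactly the nonempty subsets of the form (point) or (union of cosets of a subgroup $H \le \Q$ that is closed under multiplication by $1/2$ in the appropriate sense), reducing the whole question to the statement that $\Q$, and every quotient $\Q/H$, being divisible, has no maximal subgroups — a standard fact. Assembling this reduction carefully, rather than any single computation, is the crux.
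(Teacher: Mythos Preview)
Your core intuition --- reducing to the fact that $(\Q,+)$, being divisible, has no maximal subgroups --- is correct and is exactly the engine driving the paper's proof. But your structural description of the subquandles of $T(\Q)$ is wrong, and that is where your argument stalls. You repeatedly reach for ``unions of cosets of a subgroup $H$'' as candidate subquandles, and you tack on a mysterious ``closed under multiplication by $1/2$'' condition. In fact the nonempty subquandles of $(\Q,\dthru)$ are exactly the \emph{single} cosets $a+H$ of arbitrary subgroups $H\leq(\Q,+)$ (this is Lemma~5.1 of Saki--Kiani, which the paper cites), and no $1/2$-divisibility hypothesis on $H$ is needed: if $x=a+h_1$ and $y=a+h_2$ then $2y-x=a+(2h_2-h_1)\in a+H$. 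A genuine union of several distinct $H$-cosets is a subquandle only when it happens to coincide with a single coset of some larger subgroup, so your step ``take the union of $H$-cosets meeting $M$, missing at least one coset'' does not in general produce a subquandle at all.

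Once the correct characterization is in hand, your route actually becomes \emph{shorter} than the paper's: given $M=a+H\psq\Q$, divisibility of $\Q$ supplies $H'$ with $H\lneq H'\lneq\Q$, and then $a+H'$ is a proper subquandle strictly containing $M$. The paper instead splits on whether $0\in M$. If $0\notin M$, it invokes the earlier Theorem~\ref{QRacknotComp} that $\{0\}$ has no complement in $\cL(\Q)$, so in particular $\{0\}$ cannot complement $M$, and $M$ is not maximal. Only when $0\in M$ does the paper use the subgroup characterization together with the absence of maximal subgroups in $(\Q,+)$, finding $q$ with $\qgen{M\cup\{q\}}\subseteq\gen{M\cup\{q\}}\subsetneq\Q$. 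Your approach, once repaired, bypasses that case split but needs the full coset classification rather than just the ``contains $0$'' case.
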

\begin{proof}
Fix arbitrary $M\psq \Q$. We use the fact that maximal subquandles must be complemented by every singleton in their set-theoretic complement and show that this cannot hold for $M$. We will also use an analagous fact for groups:
\begin{lemma} \label{maximalsubgroup}
Let $G$ be a group. Then, $M\lneq G$ is maximal iff $\gen{M\cup\{x\}}=G$ for each $x\in G\setminus M$.
\end{lemma}
\begin{proof}
The forward direction is clear. Now suppose $\gen{M\cup\{x\}}=G$ for each $x\in G\setminus M$. Fix arbitrary $N\leq G$ properly extending $M$. Then, there is some nonempty $S\subseteq G\setminus M$ such that $N=\gen{M\cup S}.$ Fixing some $s\in S$, we see
$$G=\gen{M\cup\{s\}}\subseteq \gen{M\cup S} = N,$$
so $N=G$. Hence, the only subgroup of $G$ properly extending $M$ is $G$ itself, so $M$ is maximal.
\end{proof}
By Theorem \ref{QRacknotComp}, $\{0\}$ has no complement in $\cL(\Q)$, so $\{0\}$ is not a complement of $M$. Thus, if $0\notin M$, $M$ cannot be maximal. Now suppose $0 \in M.$ By Lemma 5.1 in \cite{SakiKiani2021}, $M$ is a subgroup of $(\Q,+).$ Since $\Q$ has no maximal subgroups, by Lemma \ref{maximalsubgroup}, there is some $q\in \Q\setminus M$ such that $\gen{M\cup\{q\}}$ is a proper subgroup of $\Q.$ Hence, $\qgen{M \cup\{q\}}\subseteq \gen{M\cup\{q\}}\subset Q$, so $\{q\}$ is not a complement of $M$ in $\cL(Q)$. Therefore, $M$ cannot be maximal.
\end{proof}

\subsection{Ind-finite Quandles}
Although $\Q$ is an infinite quandle with a non-complemented sublattice, $\Q$ does not have any clear relationship to finite quandles, which do have complemented sublattices. In this section, we capture one type of relationship an infinite quandle can have with finite quandles by examining ind-finite quandles. We also provide an example which shows that the subquandle lattices of ind-finite quandles are not necessarily complemented.
\begin{definition}
A set $(A,\leq)$ is \textit{directed} if for all $a,b\in A$, there exists some $c\in A$ such that $a\leq c$ and $b\leq c$. In other words, every pair of elements in $A$ has an upper bound.
\end{definition}
\begin{definition}
A nonempty quandle $Q$ is \textit{ind-finite} if there is a family of finite quandles $\{Q_i\}_{i \in \cI}$ indexed by a directed set $(\cI, \leq)$ such that for all $i,j\in\cI$,
\begin{enumerate}
    \item $Q_i\sq Q$,
    \item  $|Q_i|<\infty$,
    \item $i\leq j$ implies $Q_i\sq Q_j$, and
    \item $Q=\bigcup_{i\in\cI} Q_i.$ 
\end{enumerate}
\end{definition}
There is an equivalent formulation of ind-finiteness which will soon be useful. This formulation requires the Axiom of Dependent Choice.
\begin{lemma}
Suppose $Q$ is a nonempty quandle. Then, $Q$ is ind-finite if and only if every finitely generated subquandle of $Q$ is finite.
\end{lemma}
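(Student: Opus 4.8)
The plan is to prove both directions by relatively direct arguments, with the reverse direction (every finitely generated subquandle finite $\Rightarrow$ ind-finite) being the substantive one that uses Dependent Choice.

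\textbf{Forward direction.} Suppose $Q$ is ind-finite, witnessed by $\{Q_i\}_{i\in\cI}$ over a directed set $(\cI,\leq)$, and let $A = \{a_1,\dots,a_m\}\subseteq Q$ be a finite set. Since $Q = \bigcup_{i\in\cI} Q_i$, each $a_k$ lies in some $Q_{i_k}$; using directedness of $\cI$ finitely many times, I would find a single index $j$ with $i_k \leq j$ for all $k$, so that $A\subseteq Q_j$ by axiom (3). Then $\qgen{A}$, the subquandle generated by $A$, is contained in the subquandle generated by $Q_j$, which is $Q_j$ itself (it is already a subquandle). Hence $\qgen{A}\subseteq Q_j$ is finite. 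This direction needs only finitary directedness, no choice principle.

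\textbf{Reverse direction.} Assume every finitely generated subquandle of $Q$ is finite. I would take $\cI$ to be the collection of all finite subsets of $Q$, ordered by inclusion; this is clearly directed (the union of two finite sets is finite). For $S\in\cI$ set $Q_S := \qgen{S}$, the subquandle generated by $S$, which is finite by hypothesis. Axioms (1) and (2) are immediate, and (3) holds since $S\subseteq T$ implies $\qgen{S}\subseteq\qgen{T}$. For (4), every $q\in Q$ lies in $Q_{\{q\}}$, so $Q = \bigcup_{S\in\cI} Q_S$. This gives an ind-finite presentation directly, and in fact this much requires no choice at all — so I should double-check where the paper actually needs Dependent Choice. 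The likely subtlety is that the paper may want the indexing family to be countable or may define ind-finite via a chain (a sequence $Q_1\sq Q_2\sq\cdots$) rather than an arbitrary directed system; in that case one builds an increasing sequence $S_1\subseteq S_2\subseteq\cdots$ of finite sets whose union is $Q$ only when $Q$ itself is countable, and more care (and a choice principle) is needed to exhaust $Q$ by a countable increasing chain of finite subquandles when $Q$ is not assumed countable.

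\textbf{The main obstacle.} I expect the real work — and the reason Dependent Choice is invoked — to be exactly this exhaustion issue: showing that one can arrange the witnessing family so that it genuinely covers $Q$ while remaining a \emph{chain} (or a well-behaved directed system), rather than the trivial "all finite subsets" family. If the definition in the paper is the arbitrary-directed-set one I quoted above, then the naive argument I gave needs no choice and the mention of Dependent Choice would be for a variant or a strengthening; if instead one insists on a linearly ordered chain, the argument must recursively pick, at stage $n$, a finite subquandle $Q_n$ containing $Q_{n-1}$ together with "enough" new elements, and verifying that $\bigcup_n Q_n = Q$ is where a choice principle enters. I would resolve this by reading the definition carefully and, in the chain case, performing the recursive construction: at each step enlarge the current finite subquandle by adjoining one not-yet-covered element (choosing such an element by Dependent Choice, iterating over an enumeration or a well-ordering-free recursion), then take the generated subquandle, which stays finite by hypothesis.
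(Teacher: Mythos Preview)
Your proof is correct in both directions. The forward direction is essentially the paper's argument (find a single index whose $Q_j$ contains all the generators), and your version is in fact cleaner: the paper asserts that ``$\sq$ is a total order on $\{Q_i\}_{i\in\cI}$,'' which does not follow from the definition, whereas you correctly use only directedness to obtain the common upper bound.

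The reverse direction, however, takes a genuinely different route. You let $\cI$ be the collection of finite subsets of $Q$ under inclusion and set $Q_S := \qgen{S}$; this is a directed system satisfying all four axioms of the definition and needs no choice principle whatsoever. The paper instead builds a \emph{well-ordered chain}: it fixes some $x\in Q$, sets $Q_0 = \qgen{x}$, takes the index set to be the cardinal $|Q|$, and at each successor stage chooses some $y\in Q\setminus\bigcup_{k\le i}Q_k$ and puts $Q_{i+1} = \qgen{Q_i,y}$. This is exactly the recursive construction you anticipated in your last paragraph, and it is where the invocation of Dependent Choice enters (though as written the transfinite version for uncountable $Q$ really uses more than DC). Your argument is more elementary and matches the stated directed-set definition of ind-finite on the nose; the paper's argument yields the stronger conclusion that one may take the witnessing system to be a chain, at the cost of a choice principle the lemma as stated does not need.
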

\begin{proof}
Suppose that $Q$ is ind-finite with $Q=\bigcup_{i\in\cI} Q_i$,
and fix an arbitrary finitely generated subquandle $Q'$ of $Q$. Let $\{g_1,\dots,g_n\}$ be a generating set for $Q'$. Then, for each $1\leq j \leq n$, since $g_j\in Q$, there exists some $i(j)\in \cI$ such that $g_j \in Q_{i(j)}$. Clearly, 
$$Q'=\qgen{\{g_1,g_2,\dots, g_n\}} \subseteq \qgen{Q_{i(1)},\dots,Q_{i(n)}}=\qgen{\cup_{1\leq j\leq n} Q_{i(j)}}.
$$ But, since $\sq$ is a total order on $\{Q_i\}_{i\in\cI}$, $\bigcup_{1\leq j\leq n} Q_{i(j)} = Q_m$,
where $m=\max_{1\leq j\leq n} i(j).$ Thus, $\qgen{Q_{i(1)},\dots,Q_{i(n)}} = \qgen{Q_m}=Q_m$. Since $Q_m$ is finite by assumption and $Q'\subseteq Q_m$, $Q'$ must also be finite.
\\\\
Now suppose that every finitely generated subquandle of $Q$ is finite. Let $x\in Q$ be arbitrary, and define $Q_0 = \qgen{x}$ and $\cI = |Q|$ (which is a cardinal that isn't necessarily finite). For each $i \in \cI$, while $\bigcup_{k\leq i} Q_k \neq Q,$ let $Q_{i+1}=\qgen{Q_i, y}$ for some $y\in Q\setminus \bigcup_{k\leq i} Q_k.$ Since each $Q_i$ is finitely generated subquandle of $Q$ by construction, each $Q_i$ must also be finite. Moreover, $\bigcup_{i\in |Q|} Q_i = Q$, since every element of $Q$ can be reached by this construction, and $Q_i\psq Q_{i+1}$ for each $i\in\cI$. By definition, $Q$ is ind-finite.
\end{proof}
As a corollary, we find that the infinite quandle $\Q$ cannot be ind-finite. 
\begin{corollary}
$(\Q, \dthru)$ is not ind-finite.
\end{corollary}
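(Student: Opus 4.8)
The plan is to use the equivalence established in the preceding lemma: a nonempty quandle is ind-finite if and only if each of its finitely generated subquandles is finite. Hence it suffices to exhibit one finitely generated subquandle of $(\Q,\dthru)$ that is infinite.

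The candidate is $Q' := \qgen{\{0,1\}}$, and the claim is that $Q' = \Z$, regarded as a subset of $\Q$. For the inclusion $Q'\subseteq\Z$, observe that $2y-x\in\Z$ whenever $x,y\in\Z$, so $\Z$ is closed under $\dthru$ and is therefore a subquandle of $\Q$ containing $\{0,1\}$; minimality of $\qgen{\{0,1\}}$ gives $Q'\subseteq\Z$. For the reverse inclusion, since $\dthru$ is involutory (the Takasaki quandle is a kei), any subquandle containing $\{0,1\}$ is in particular closed under $\dthru$, and a two-sided induction does the rest: $0,1\in Q'$, and whenever $n,n+1\in Q'$ we obtain $n+2 = n\dthru(n+1) = 2(n+1)-n\in Q'$ and $n-1 = (n+1)\dthru n = 2n-(n+1)\in Q'$. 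Starting from $\{0,1\}$ this reaches every integer, so $\Z\subseteq Q'$.

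Therefore $Q' = \Z$ is an infinite, finitely generated subquandle of $\Q$, and by the lemma $(\Q,\dthru)$ is not ind-finite.

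I do not anticipate any serious obstacle. The only points requiring mild care are the bookkeeping in the induction --- one must reach both the positive and the negative integers from the seed $\{0,1\}$ --- and the remark that, because the dihedral action is involutory, closure under $\dthru$ alone suffices and there is no need to separately track $\bthru$.
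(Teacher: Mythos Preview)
Your proof is correct and follows the same overall strategy as the paper: invoke the preceding lemma and exhibit an infinite two-generated subquandle of $(\Q,\dthru)$. The only difference is in the verification step. The paper takes arbitrary $x<y$ and cites an external result (Theorem~3.3(5) of \cite{SakiKiani1}) describing $\qgen{x,y}$ to conclude it contains the infinite set $\{y+j(y-x):j\in\N\}$, whereas you take the specific generators $0,1$ and prove $\qgen{0,1}=\Z$ directly by a clean two-sided induction. Your argument is more self-contained and elementary, avoiding the external citation; the paper's is nominally more general but only because it outsources the work.
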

\begin{proof}
Fix any $x,y\in \Q$ such that $x<y$. By part (5) of Theorem 3.3 in \cite{SakiKiani1}, 
$$\qgen{x,y}=\{r_0x+r_1y\},$$
where $r_0,r_1\in \Z$, exactly one of $r_0,r_1$ is odd, and $r_0+r_1=1$. In particular, $\{y+j(y-x) : j \in \N\} \subseteq \qgen{x,y}.$ Hence, $\qgen{x,y}$ is finitely-generated and infinite subquandle of $\Q$, so by the previous lemma, $\Q$ cannot be ind-finite.
\end{proof}
But, it turns out that we can transform $\Q$ quite easily to create an ind-finite quandle. We will also see that this transformed quandle maintains the properties of having a non-complemented subquandle lattice and no maximal subquandles, thus proving that the subquandle lattices of ind-finite quandles are not necessarily complemented.
\begin{proposition}
$(\Q / \Z, \dthru)$ is ind-finite.
\end{proposition}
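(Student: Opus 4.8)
The plan is to invoke the characterization from the preceding lemma: a nonempty quandle is ind-finite if and only if every finitely generated subquandle of it is finite. Equivalently, and more concretely, it suffices to exhibit a directed family of finite subquandles whose union is $\Q/\Z$; I would present it this way, since the family is easy to write down explicitly.

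For each positive integer $n$, set $Q_n := \tfrac1n\Z/\Z = \{\,k/n + \Z : 0 \le k < n\,\}$, a subset of $\Q/\Z$ of cardinality $n$. The first step is to check that $Q_n$ is a subquandle: by the remark following the definition of subquandle it is enough to verify closure under $\dthru$, and indeed for $x = a/n+\Z$ and $y = b/n+\Z$ we have $x\dthru y = 2y-x = (2b-a)/n + \Z \in Q_n$. This takes care of conditions (1) and (2) in the definition of ind-finiteness.

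Next I would order the index set $\cI = \N$ by divisibility. This is directed, since $\mathrm{lcm}(m,n)$ is an upper bound for $m$ and $n$. If $m \mid n$ then $\tfrac1m\Z \subseteq \tfrac1n\Z$, so $Q_m \sq Q_n$, which is condition (3). Finally, any element of $\Q/\Z$ has the form $p/q+\Z$ for some $q \in \N$, and $p/q+\Z \in Q_q$, so $\bigcup_{n\in\N} Q_n = \Q/\Z$, giving condition (4); hence $\Q/\Z$ is ind-finite. (Equivalently, via the lemma: given finitely many elements $\bar x_1,\dots,\bar x_k$ with representatives $p_i/q_i$, the subquandle $\qgen{\bar x_1,\dots,\bar x_k}$ is contained in $Q_N$ for $N = \mathrm{lcm}(q_1,\dots,q_k)$, hence finite.) I do not expect a genuine obstacle here: the only points that require a moment's attention are that the index set must be ordered by divisibility rather than by the usual order, and the (routine) $\dthru$-closure of each $Q_n$.
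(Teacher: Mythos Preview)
Your proof is correct and follows essentially the same idea as the paper's: exhibit $\Q/\Z$ as a directed union of its finite cyclic subgroups, each of which is a subquandle of the Takasaki kei. The only cosmetic difference is the choice of indexing family: the paper uses the chain $\langle \tfrac{1}{n!}+\Z\rangle$ with the usual order on $\N$ (so that the family is totally ordered, matching how the preceding lemma was argued), whereas you use all $Q_n=\tfrac{1}{n}\Z/\Z$ with the divisibility order on $\N$. Both verify the definition of ind-finiteness directly, and neither is materially simpler than the other.
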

\begin{proof}
It is easy to see that when an abelian group is made into a quandle using the Takasaki operation, any subgroup is a subquandle. Therefore, $\langle\frac{1}{n!}+\Z\rangle$ is a (finite) subquandle of $\Q/\Z$ for each $n\in\mathbb{N}.$ Observe then that $\Q/\Z$ is the ascending union of the subgroups generated by the cosets of the reciprocals of factorials, i.e., $\Q/\Z=\bigcup_{n\in\mathbb{N}}\langle\frac{1}{n!}+\Z\rangle$,  and thus is easily seen to be ind-finite.
\end{proof}
\begin{proposition}
$(\Q / \Z, \dthru)$ does not have a complemented subquandle lattice and has no maximal subquandles.
\end{proposition}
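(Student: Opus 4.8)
The plan is to prove the two assertions in tandem: I would first show that the singleton $\{0+\Z\}$ has no complement in $\cL(\Q/\Z)$ --- this alone settles non-complementedness --- and then deduce that $\Q/\Z$ has no maximal subquandles by exactly the case analysis used in the proof of Proposition~\ref{QNoMaximal}. The one genuinely new ingredient I need is a description of the subquandles of $T(\Q/\Z)$, so I would set that up first.

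The key observation is that $\Q/\Z$, like $\Q$, is \emph{locally cyclic}: writing $\Q/\Z\cong\bigoplus_p\Z(p^\infty)$, any finitely generated subgroup is a finite direct sum of cyclic groups of pairwise coprime prime-power orders, hence itself cyclic. I would then prove the following generalization of Lemma~5.1 of \cite{SakiKiani2021}: for a locally cyclic abelian group $A$, every subquandle $P\sq T(A)$ with $0\in P$ is a subgroup of $A$. Closure of $P$ under negation is immediate from $x\dthru 0=-x$; and for $x,y\in P$, the group $\gen{x,y}$ is cyclic, say $\gen{g}$ with $x=ag$ and $y=bg$, and inside $T(\gen{g})$ one checks (using $\qgen{0,ag}=\gen{ag}$, likewise for $bg$, and the fact that the subgroup $\gen{x,y}$ is a subquandle) that $\qgen{0,x,y}=\gen{x,y}$; since $P$ contains $0,x,y$ this forces $x+y\in P$, so $P$ is a subgroup. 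Because translations $x\mapsto x+c$ are automorphisms of $T(A)$, it follows that every nonempty subquandle of $T(\Q/\Z)$ is a coset $c+H$ of a subgroup $H$, and that $\qgen{S}=\gen{S}$ (the subgroup generated) whenever $0+\Z\in S$. I would flag that local cyclicity is essential here: in $T((\Z/2)^2)$ the subquandle generated by $0,e_1,e_2$ omits $e_1+e_2$, so subquandles of Takasaki quandles need not be cosets in general.

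Given this, suppose $N$ complements $\{0+\Z\}$. Then $N\neq\emptyset$, so $N=c+H$ for a subgroup $H$, and $0+\Z\notin N$ forces $c\notin H$. Using the previous paragraph, $\Q/\Z=N\vee\{0+\Z\}=\qgen{(c+H)\cup\{0+\Z\}}=\gen{c+H}=\gen{c}+H$, so $(\Q/\Z)/H$ is a quotient of the cyclic group $\gen{c}$, hence cyclic, and is also divisible (a quotient of the divisible group $\Q/\Z$). A divisible cyclic group is trivial, so $H=\Q/\Z$, contradicting $c\notin H$; hence $\{0+\Z\}$ has no complement and $\cL(\Q/\Z)$ is not complemented. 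For the maximal-subquandle statement I would repeat the argument of Proposition~\ref{QNoMaximal}: given $M\psq\Q/\Z$, if $0+\Z\notin M$ then maximality of $M$ would force $M\vee\{0+\Z\}=\Q/\Z$, making $\{0+\Z\}$ a complement of $M$, which we have just ruled out; and if $0+\Z\in M$, then $M$ is a subgroup of $\Q/\Z$, which being a nonzero divisible group has no maximal subgroups, so by Lemma~\ref{maximalsubgroup} there is $q\in(\Q/\Z)\setminus M$ with $\gen{M\cup\{q\}}$ a proper subgroup, whence $\qgen{M\cup\{q\}}\subseteq\gen{M\cup\{q\}}\subsetneq\Q/\Z$ and $M$ is not maximal.

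The main obstacle is the subquandle classification: one must recognize that subquandles of a Takasaki quandle are not cosets of subgroups in general, so the proof must genuinely exploit that $\Q/\Z$ is locally cyclic (equivalently, that every two-generated subgroup is cyclic). After that, everything reduces to the same two facts about $\Q/\Z$ that were used in the treatment of $\Q$ --- that a nonzero divisible abelian group has no maximal subgroups and no nontrivial divisible cyclic quotient --- so the remaining bookkeeping is routine.
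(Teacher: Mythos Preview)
Your proposal is correct and follows the same outline as the paper: classify the nonempty subquandles of $T(\Q/\Z)$ as cosets of subgroups, show that $\{0+\Z\}$ has no complement, and then run the case split of Proposition~\ref{QNoMaximal} verbatim using that $(\Q/\Z,+)$ has no maximal subgroups. The paper's own proof is terser---it simply asserts that the arguments of Lemma~5.1 and Theorem~5.2 of \cite{SakiKiani2021} go through after replacing each $x\in\Q$ by its coset $[x]\in\Q/\Z$---whereas you make the subquandle classification explicit via the local cyclicity of $\Q/\Z$ (a mild generalization the paper does not isolate) and give a self-contained divisibility argument for why $\{0+\Z\}$ has no complement rather than pointing to the cited theorem; these are differences of presentation, not of strategy.
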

\begin{proof}
Note that $\Q/\Z=\{[\frac{m}{n}]: \frac{m}{n}\in \Q\},$ where $[\frac{m}{n}]=\frac{m}{n}+\Z.$ We are assuming $\Q/\Z$ is equipped with the (well-defined) operation $[x]\dthru [y]=[2x-y]$, where $x,y\in \Q$. Thus, in the proofs of Lemma 5.1 and Theorem 5.2 in \cite{SakiKiani2021}, we may replace the elements $x\in \Q$ with their corresponding cosets $[x]\in \Q/\Z$ to obtain the following analogous results for $\Q/\Z$:
\begin{enumerate}
    \item The nonempty subquandles of $\Q/\Z$ are exactly the cosets of the subgroups of $\Q/\Z.$ 
    \item $\cL((\Q/\Z,\dthru))$ is not complemented. In particular, $\{\Z\}$ has no complement.
\end{enumerate}
To show $\Q/\Z$ has no maximal subquandles, fix an arbitrary $M\psq \Q/\Z$ --- we show that $M$ cannot be maximal. As in  Proposition \ref{QNoMaximal}, if $\Z\notin M$, $M$ cannot be maximal. Additionally, if $\Z\in M$, note that $(\Q/\Z,+)$ has no maximal subgroups, so the same argument from Proposition \ref{QNoMaximal} shows that $M$ cannot be maximal.
\end{proof}
\subsection{Profinite Quandles}
Profinite quandles are the dual to ind-finite quandles --- just as ind-finite quandles are the direct limit of a directed system of finite quandles, profinite quandles are the inverse limit of an inverse system of finite quandles. In this section, we formalize the notion of a profinite quandle and provide several examples of infinite profinite quandles. We leave the complementedness of profinite quandle lattices as a question for future research.

\begin{definition}
Let $(\cI, \leq)$ be a directed set, and let $\{Q_i\}_{i \in \cI}$ be a family of quandles indexed by $\cI$. Suppose we have a family of quandle homomorphisms $\{f_{ij}:Q_j\to Q_i:i,j\in \cI, \ i\leq j\}$ such that for all $i,j,k\in\cI$,
\begin{enumerate}
    \item $f_{ii} = \mathrm{id}_{Q_i}$, and
    \item If $i \leq j \leq k$, $f_{ik} = f_{ij}\circ f_{jk}$.
\end{enumerate}

    \adjustbox{center}{
    \begin{tikzcd}[row sep=large,column sep=huge]
    Q_k \arrow[rd,"f_{ik}"] \arrow[r, "f_{jk}"] & Q_j \arrow[d,"f_{ij}"] \\
    & Q_i
    \end{tikzcd}}

Then, the pair $(\{Q_i\}_{i \in \cI}, \{f_{ij}\}_{i\leq j \in \cI})$ is called an \textit{inverse system} of quandles and morphisms over $\cI.$
\end{definition}
\begin{definition} \label{limit}
Given an inverse system $(\{Q_i\}_{i \in \cI}, \{f_{ij}\}_{i\leq j \in \cI})$, we define the \textit{inverse limit} of the system as follows:
$$Q=\lim _{\overleftarrow{i \in \cI}} Q_{i}=\left\{\vec{q} \in \prod_{i \in \cI} Q_{i} : q_{i}=f_{i j}\left(q_{j}\right) \text { for all } i \leq j \text { in } \cI\right\}.$$
\end{definition}
We say a quandle $Q$ is \textit{profinite} it is the inverse limit of the inverse system composed of a family of finite quandles and their morphisms. Indeed, this construction always yields a quandle:
\begin{proposition}
Profinite quandles are quandles under coordinatewise operations.
\end{proposition}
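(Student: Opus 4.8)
The plan is to exhibit the inverse limit $Q = \lim_{\overleftarrow{i\in\cI}} Q_i$ as a subquandle of the full coordinatewise product $P := \prod_{i\in\cI} Q_i$, after which the claim is immediate from the subquandle criterion: closure under $\thru$ suffices.

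First I would check that $P$, equipped with $(\vec q \thru \vec r)_i := q_i \thru r_i$ and $(\vec q \bthru \vec r)_i := q_i \bthru r_i$, is a quandle. Each of Q1, Q2, Q3 is a universally quantified identity (Q2 being a conjunction of two) valid in every $Q_i$, so it holds coordinate by coordinate in $P$; verifying this is pure bookkeeping and involves no choices.

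Next I would show that $Q \subseteq P$ is closed under both operations. Fix $\vec q, \vec r \in Q$ and $i \le j$ in $\cI$; by definition of $Q$ we have $q_i = f_{ij}(q_j)$ and $r_i = f_{ij}(r_j)$, so since $f_{ij}$ is a quandle homomorphism,
$$(\vec q \thru \vec r)_i = q_i \thru r_i = f_{ij}(q_j)\thru f_{ij}(r_j) = f_{ij}(q_j \thru r_j) = f_{ij}\bigl((\vec q \thru \vec r)_j\bigr),$$
which shows $\vec q \thru \vec r \in Q$. The identical computation handles $\bthru$ once we know each $f_{ij}$ respects $\bthru$ as well, and this is a general fact about quandle homomorphisms: applying $f_{ij}$ to the instance $(x\bthru y)\thru y = x$ of Q2 gives $f_{ij}(x\bthru y)\thru f_{ij}(y) = f_{ij}(x)$, and then the uniqueness of right-division (again Q2, now read in $Q_i$) forces $f_{ij}(x\bthru y) = f_{ij}(x)\bthru f_{ij}(y)$.

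Therefore $Q$ is a subquandle of the quandle $P$, hence itself a quandle under the coordinatewise operations, which is the assertion. The only step that is not completely mechanical is the observation that quandle homomorphisms automatically preserve $\bthru$; this is precisely why no extra compatibility hypothesis on the bonding maps $f_{ij}$ is needed for $Q$ to be closed under $\bthru$, so it is the point I would state explicitly rather than leave to the reader.
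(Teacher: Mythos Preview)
Your argument is correct and mirrors the paper's: check Q1--Q3 coordinatewise in the product and then verify, via the bonding homomorphisms, that the inverse limit is closed under the operations. You are in fact more careful than the paper, which verifies closure only under $\thru$; your explicit observation that quandle homomorphisms automatically preserve $\bthru$ (so that $Q$ is closed under $\bthru$ as well) is genuinely needed here, since the paper's earlier blanket claim that closure under $\thru$ alone suffices for a subquandle is valid only in the finite case.
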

\begin{proof}
Suppose that $Q=\displaystyle\lim _{\overleftarrow{i \in \cI}} Q_{i},$ where $(\{Q_i\}_{i \in \cI}, \{f_{ij}\}_{i\leq j \in \cI})$
is an inverse system of finite quandles. Also suppose the associated quandle operation for each $Q_i$ is $\thru_i.$ Because each $Q_i$ is a quandle under $\thru_i$, axioms Q1, Q2, and Q3 hold for $Q$ since they hold in each of its coordinates. But, since $Q$ may be infinite, we must also verify that $Q$ is closed under its operation $\thru$. Without loss of generality, let $\cI=\N$. Also let $(q_1,q_2,\dots),(r_1,r_2,\dots)\in Q$ be arbitrary. Then,
\begin{align*}
    (q_1,q_2,\dots) \thru (r_1,r_2,\dots) &= (q_1\thru_1 r_1,q_2\thru_2 r_2,\dots),
\end{align*}
so, by the definition of $Q$, it suffices to show that for arbitrary $i,j\in \cI$, $f_{ij}(q_j \thru_j r_j)=q_i\thru_i r_i$. But, because $f_{ij}$ is a quandle homomorphism, $f_{ij}(q_j \thru_j r_j)=f_{ij}(q_j)\thru_i f_{ij}(r_j)=q_i\thru_i r_i$. Hence, $Q$ is closed under $\thru$, as desired.
\end{proof}
Next, we provide a canonical way to construct a profinite quandle from a profinite abelian group: consider the group as a Takasaki kei, as defined in Example \ref{dihedralKei}.

\begin{proposition}
\label{Profinite Construction}
Suppose $(\{Q_i\}_{i \in \cI}, \{f_{ij}\}_{i\leq j \in \cI})$ is an inverse system of finite abelian groups and their morphisms. Then, its inverse limit $Q$ is a profinite Takasaki kei under coordinatewise operations.
\end{proposition}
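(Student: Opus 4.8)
The plan is to deduce this directly from the previous proposition, once the given data is reinterpreted quandle-theoretically. First I would replace each finite abelian group $Q_i$ by the Takasaki kei $T(Q_i)$ of Example \ref{dihedralKei}, equipped with $x \dthru y = 2y - x$. Since $Q_i$ is finite, $T(Q_i)$ is a finite quandle, and the involutory identity $(x \dthru y)\dthru y = 2y - (2y - x) = x$ shows each $T(Q_i)$ is in fact a kei. Next I would check that the structure maps remain morphisms after this reinterpretation: for $i \leq j$ and $x,y \in Q_j$, since $f_{ij}$ is a group homomorphism we have $f_{ij}(x \dthru y) = f_{ij}(2y - x) = 2 f_{ij}(y) - f_{ij}(x) = f_{ij}(x) \dthru f_{ij}(y)$, so each $f_{ij}$ is a quandle homomorphism $T(Q_j) \to T(Q_i)$. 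The compatibility conditions $f_{ii} = \mathrm{id}$ and $f_{ik} = f_{ij}\circ f_{jk}$ are untouched by the change of operation, so $(\{T(Q_i)\}_{i\in\cI}, \{f_{ij}\}_{i\leq j\in\cI})$ is an inverse system of finite quandles.

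Second, I would identify the inverse limit. By Definition \ref{limit}, the underlying set of $\lim_{\overleftarrow{i\in\cI}} T(Q_i)$ is exactly the same subset of $\prod_{i\in\cI} Q_i$ as the underlying set of the inverse limit abelian group, since the defining compatibility condition $q_i = f_{ij}(q_j)$ makes no reference to the operation at all. On this set, the coordinatewise quandle operation is $\vec q \dthru \vec r = (2 r_i - q_i)_{i\in\cI} = 2\vec r - \vec q$, which is precisely the Takasaki operation attached to the limit group. Hence $Q = \lim_{\overleftarrow{i\in\cI}} T(Q_i)$ is, as a quandle, the Takasaki quandle of the profinite abelian group $\lim_{\overleftarrow{i\in\cI}} Q_i$. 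By the previous proposition, $Q$ is a quandle under these coordinatewise operations, and it is profinite by definition, being an inverse limit of finite quandles.

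Finally, I would observe that $Q$ is a kei: the involutory identity $(\vec q \dthru \vec r)\dthru \vec r = \vec q$ holds in $Q$ because it holds in each coordinate $T(Q_i)$, each of which was shown to be a kei in the first step. Combining the three steps yields that $Q$ is a profinite Takasaki kei under coordinatewise operations, as claimed.

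I do not expect any genuine obstacle here; the one point that requires care is the bookkeeping in the second step, namely confirming that the inverse limit computed in quandles and the inverse limit computed in abelian groups carry the same underlying set, so that the coordinatewise quandle operation genuinely \emph{is} the dihedral action of the limit group rather than merely restricting to it on each factor.
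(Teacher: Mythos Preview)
Your proof is correct, but it takes a more elementary route than the paper's. You argue directly: the compatibility condition $q_i = f_{ij}(q_j)$ defining the inverse limit in Definition~\ref{limit} is purely set-theoretic, so the underlying set of $\lim_{\overleftarrow{i}} T(Q_i)$ coincides with that of the abelian-group limit, and the coordinatewise dihedral operation visibly agrees with the Takasaki operation on the limit group. The paper instead packages the passage $Q_i \mapsto T(Q_i)$ as a functor $\Tak:\mathbf{Ab}\to\mathbf{Kei}$, constructs an explicit left adjoint $\AdTak(K) = \langle K\rangle_{\mathrm{Ab}}/\langle x\thru y - 2y + x\rangle$, and then invokes the general fact that right adjoints preserve limits to conclude $\Tak(\lim Q_i) \cong \lim \Tak(Q_i)$.

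Your hands-on verification is shorter and entirely adequate for this proposition. The paper's categorical detour buys something beyond the immediate statement: once the adjunction is in hand, the same reasoning applies verbatim to any limit-preserving functor out of groups, which is how the paper immediately obtains the further example that $\Conj(G)$ is profinite whenever $G$ is a profinite group (since $\Conj$ also has a left adjoint, namely $\Adconj$). Your direct argument would need to be repeated for each such functor, whereas the adjoint argument handles them uniformly.
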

\begin{proof}
Since each $Q_i$ is an abelian group, it is a Takasaki kei. Additionally, each group homomorphism $f_{ij}$ is a quandle homomorphism by following lemma:
\begin{lemma}
Suppose $(Q_1,\dthru)$ and $(Q_2,\dthru)$ are Takasaki quandles, and suppose $(Q_1,+)$ and $(Q_2,+)$ are the corresponding groups. Then, any group homomorphism $f:Q_1\to Q_2$ is also a quandle homomorphism.
\end{lemma}

This lemma proves that regarding an abelian group as a Takasaki quandle is a functor into $\mathbf{Kei}$, the full subcategory of $\mathbf{Quand}$ whose objects are keis. We shall give it a name.

\begin{definition} The
\textit{Takasaki functor}, denoted $\Tak:\mathbf{Ab}\rightarrow \mathbf{Kei}$, assigns to each abelian group the Takesaki kei.
\end{definition}

Thus, $(\{Q_i\}_{i \in \cI}, \{f_{ij}\}_{i\leq j \in \cI})$ is also an inverse system of finite Takasaki quandles. To show that the inverse limit of this inverse system is a profinite quandle, it suffices to show that the functor $\Tak$ has a left adjoint, as this will show it preserves all limits. We shall construct this left adjoint explicitly.

\begin{definition}
Let $K$ be a kei. Then we define $\AdTak(K)$ to be the quotient group
\[\AdTak(K) = \langle K\rangle_{Ab}/\langle x\thru y - 2y + x: x,y\in K\rangle_{Ab}\]
\end{definition}


It is easy to see that $\AdTak$ is a left adjoint to $\Tak$. 

\begin{proposition}
Let $\eta$ be the inclusion $x\mapsto \overline{x}$. Also let $A$ be an abelian group, and let $\phi$ be a morphism of keis $ \phi:K\to \Tak(A) $. Then $\phi$ factors uniquely through $\Tak(\phi')$ for a unique morphism of abelian groups $\phi'$, as in the commutative diagram below. 
\end{proposition}
\[\begin{tikzcd}
	K & {\Tak(\AdTak(K))} \\
	& {\Tak(A)}
	\arrow["\phi"', from=1-1, to=2-2]
	\arrow["\eta", from=1-1, to=1-2]
	\arrow["{\Tak(\phi')}", dotted, from=1-2, to=2-2]
\end{tikzcd}\]
\begin{proof}
    The map $\eta$ can be thought of as the composition (in sets) of the inclusion $K\hookrightarrow \AdTak(K)$ along with $p:\langle K \rangle_{Ab} \to \langle K \rangle_{Ab}/\langle x\thru y - 2y + x: x,y\in K\rangle_{Ab}$ (regarded as a function between underlying sets). By the universal property of free abelian groups, the function $\phi$ induces a unique group homomorphism $ \phi': \langle K \rangle_{Ab} \to A$. By definition of a Takasaki kei, for any elements $x,y\in K$, we have $\phi(x\thru y) = \phi(x)\thru \phi(y) = 2\phi(y) - \phi(x)$. Hence $ \phi(x\thru y) - 2\phi(y) + \phi(x) = \phi(x\thru y - 2y + x) = 0 $. Therefore, 
    $\langle x\thru y - 2y + x: x,y\in K\rangle_{Ab}\subseteq \ker(\phi')$, which implies $\langle x\thru y - 2y + x: x,y\in K\rangle_{Ab}\normal \ker(\phi')$. By the universal property of quotients in abelian groups, $\phi'$ induces yet another (unique) homomorphism $\mathrm{Tak}(\phi'):\AdTak(K) \to A $ of abelian groups.
\end{proof}

 Therefore, by Proposition \ref{Profinite Construction}, the inverse limit of $(\{Q_i\}_{i \in \cI}, \{f_{ij}\}_{i\leq j \in \cI})$ is a profinite quandle under coordinatewise operations.
\end{proof}

Because $\Tak$ admits a left adjoint, it preserves all limits, in particular the limits giving a profinite abelian group as the limit of the diagram of all its finite quotient groups.  This observation gives rise to a large class of profinite keis:

\begin{example} \leavevmode
\begin{enumerate} 
    
    \item Suppose $p\in \N$ is a prime. Then, the $p$-adic integers $\Z_p$ form a profinite Takasaki quandle.
    \item The additive group of the ring of profinite integers $\hat{\Z}=\prod_{p} \Z_p$ forms a profinite Takasaki quandle.
\end{enumerate}
\end{example}

The same argument applied to the functor $\Conj$ gives:
\begin{example}
If $G$ is a profinite group, then $\Conj(G)$ is a profinite quandle.  
\end{example}


Clearly some pro-finite quandles have complemented subquandle lattices. For example, besides finite quandles \cite{SakiKiani2021}, trivial quandles with an infinite profinite set (that is an infinite totally disconnected compact Hausdorff space) as an underlying set are profinite quandles, being the inverse limit of a diagram of finite trivial quandles. Moreover, these quandles, as do all trivial quandles, have the (necessarily complemented) Boolean algebra of all subsets equipped with trivial quandle structures as a subquandle lattice.

However, in general, if $Q$ is infinite, it is hard to determine if $\cL(Q)$ is complemented, as even the lattice of subquandles of $\cL(Q)$ is difficult to examine because the underlying sets of either class of examples above are uncountable.


 Nonetheless, we conjecture that the subquandle lattice of a profinite quandle is complemented.

\section{Acknowledgments}
The authors wish to thank the National Science Foudation for financial support for the conduct of this research under NSF Grant DMS-1659123.  The first four authors wish to thank the fifth author, along with Marianne Korten and Kim Klinger-Logan, for their mentorship and support during and after the SUMaR 2023, and Kansas State University for its hospitality during the REU.

\begin{section}{Bibliography}

\end{section}

\end{document}